\theoremstyle{plain}
\newtheorem{thm}{Theorem}[section]
\newtheorem{theorem}[thm]{Theorem}
\newtheorem{lemma}[thm]{Lemma}
\newtheorem{corollary}[thm]{Corollary}
\newtheorem{proposition}[thm]{Proposition}
\theoremstyle{definition}
\newtheorem{remark}[thm]{Remark}
\newtheorem{conjecture}[thm]{Conjecture}
\newtheorem{question}[thm]{Question}
\numberwithin{equation}{section}
\title [Calabi-Yau manifolds of Picard number two]{Automorphism groups of Calabi-Yau manifolds of Picard number two}
\author{Keiji Oguiso}
\address{Keiji Oguiso, Department of Mathematics, Osaka University\\
Toyonaka 560-0043 Osaka, Japan and  Korea Institute for Advanced Study, Hoegiro 87, Seoul, 130-722, Korea} \email{oguiso@math.sci.osaka-u.ac.jp}
\dedicatory{Dedicated to Professor Yujiro Kawamata on the occasion of his sixtieth birthday.}
\thanks{supported by JSPS Gran-in-Aid (B) No 22340009, JSPS Grant-in-Aid (S), No 22224001, and by KIAS Scholar Program}
\begin{document}

\maketitle

\begin{abstract}
We prove that the automorphism group of an odd dimensional Calabi-Yau manifold of Picard number two is always a finite group. This makes a sharp contrast to the automorphism groups of K3 surfaces and hyperk\"ahler manifolds and birational automorphism groups, as we shall see. We also clarify the relation between finiteness of the automorphism group (resp. birational automorphism group) and the rationality of the nef cone (resp. movable cone) for a hyperk\"ahler manifold of Picard number two. We will also discuss a similar conjectual relation together with exsistence of rational curve, expected by the cone conjecture, for a Calabi-Yau threefold of Picard number two. 
\end{abstract}

\section{Introduction}

We work over $\mathbf C$. This note is entirely inspired by a question of Doctor Taro Sano to me:
\begin{question}\label{sano}
Let $X$ be a Calabi-Yau threefold of Picard number $2$. 
How the nef cone $\overline{{\rm Amp}}\, (X)$ of $X$ looks like?
\end{question}
Here and hereafter, a {\it Calabi-Yau manifold} is in a wider sense, i.e., a smooth projective manifold $X$ such that ${\mathcal O}_X(K_X) \simeq {\mathcal O}_X$ and $h^1({\mathcal O}_X) = 0$. So a Calabi-Yau manifold in the strict sense, i.e., a smooth simply connected projective manifold $X$ such that ${\mathcal O}_X(K_X) \simeq {\mathcal O}_X$ and $h^0(\Omega_X^k) = 0$ for $0 < k < \dim\, X$ 
and a projective hyperk\"ahler manifold, i.e., a smooth simply connected projective manifold $X$ such that $H^0(\Omega_X^2) = {\mathbf C}\sigma_X$ where $\sigma_X$ is an everywhere non-degenerate $2$-form, are Calabi-Yau manifolds. The Picard group ${\rm Pic}\, (X)$ of a Calabi-Yau manifold $X$ is isomorphic to the N\'eron-Severi group ${\rm NS}\, (X)$. Its rank $\rho(X)$ is the Picard number of $X$. There are many interesting Calabi-Yau manifolds of Picard number $2$ (for instance, \cite{PSS71}, \cite{We88}, \cite{GP01}, \cite{Ku03}, \cite{Ku04} \cite{Ca07}, \cite{Sc11}, \cite{OS01}, \cite{HT01}, \cite{HT09}, \cite{HT09}, \cite{Yo01}, \cite{Yo12}, \cite{Ogr05}, \cite{Sa07}). 
We denote by $(x^n)_X$, 
where $\dim\, X = n$, the top interesection form on ${\rm Pic}\, (X)$. As usual, ${\rm Aut}\,(X)$ (resp. ${\rm Bir}\,(X)$) is the automorphism group (resp. birational automorphism group) of $X$, i.e., the group of biregular self maps (resp. birational self maps) of $X$. Other terminologies below will 
be reviewed in Section 2.

The aim of this note is to prove the following:

\begin{theorem}\label{main1}
Let $X$ be an $n$-dimensional Calabi-Yau manifold with $\rho(X) = 2$. Then: 
\begin{enumerate}
\item 
When $n$ is odd, ${\rm Aut}\, (X)$ is always a finite group. 
\item 
When $n$ is even, ${\rm Aut}\, (X)$ is also a finite group provided that there is no real number $c$ and no real valued quadratic form $q_X(x)$ on 
${\rm NS}(X)$ such that $(x^n)_X = c(q_X(x))^{n/2}$. 
\end{enumerate}
\end{theorem}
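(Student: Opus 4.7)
The plan is to reduce finiteness of ${\rm Aut}(X)$ to finiteness of the image of the natural representation $\rho \colon {\rm Aut}(X) \to {\rm GL}({\rm NS}(X))$, and then to analyze the stabilizer of the degree-$n$ intersection form $F(x) := (x^n)_X$ on the rank-$2$ lattice ${\rm NS}(X)$. The kernel of $\rho$ is contained in ${\rm Aut}(X, H)$ for any ample $H$, hence finite by the classical theorem of Matsusaka--Mumford together with the vanishing ${\rm Aut}^0(X) = 0$; this vanishing holds because $H^0(T_X) \simeq H^0(\Omega_X^{n-1}) \simeq H^1(\sO_X)^* = 0$ by Serre duality and $K_X \simeq \sO_X$, valid even for a Calabi--Yau in the wider sense.

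Next I factor $F$ over $\C$ as $F = c \prod_{i=1}^r L_i^{m_i}$ with pairwise non-proportional linear forms $L_i$ and $\sum m_i = n$; this records an unordered multiset of $r$ distinct points $p_i \in \BP^1(\C)$ with multiplicities $m_i$. In the generic case $r \geq 3$, the image of ${\rm Stab}(F)$ in ${\rm PGL}_2(\C)$ stabilizes this multiset and hence is finite, while central scalars preserving $F$ satisfy $\lambda^n = 1$; thus the full real stabilizer is finite. If $r = 1$ then $F = c L^n$ for a real linear form $L$; choosing $0 \neq D \in \ker L$ and an ample class $A$, comparing the expansions $F(sA + tD) = c(s L(A))^n$ and $(sA + tD)^n = \sum_k \binom{n}{k} s^{n-k} t^k\, A^{n-k} \cdot D^k$ yields $A^{n-1} \cdot D = 0$ and $A^{n-2} \cdot D^2 = 0$. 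This contradicts the Hodge index theorem, which makes $(D_1, D_2) \mapsto A^{n-2} \cdot D_1 \cdot D_2$ strictly negative on the $A$-primitive line when $\rho(X) = 2$; so $r = 1$ is impossible.

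The case $r = 2$ splits into three subcases. If the two roots are complex conjugate, or both real with equal multiplicities $m_1 = m_2 = n/2$, then $F = c q^{n/2}$ for a real quadratic form $q$, which requires $n$ even; both subcases are impossible under the hypothesis of (1) and are exactly what the hypothesis of (2) excludes. In the remaining subcase, both roots are real with $m_1 \neq m_2$: a Galois argument on the rational coefficients of $F$ forces $L_1, L_2$ to be defined over $\Q$ (a nontrivial element of ${\rm Gal}(\overline{\Q}/\Q)$ would swap the two distinct roots and thereby equate their multiplicities). So $\ker L_i$ are rational lines, and any $g \in {\rm Stab}(F) \cap {\rm GL}({\rm NS}(X))$ preserves each line individually (the unequal multiplicities rule out swapping), hence is diagonalizable with rational eigenvalues $\lambda_1, \lambda_2$; the rational root theorem applied to the monic integer characteristic polynomial of $g$, combined with $\lambda_1 \lambda_2 = \det g = \pm 1$, forces $\lambda_i \in \{\pm 1\}$, bounding this stabilizer by $4$. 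The most subtle step is the Hodge index argument eliminating $r = 1$; the remainder is a direct case analysis of real binary forms of degree $n$.
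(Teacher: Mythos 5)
Your proof is correct, but it takes a genuinely different route from the paper's. The paper never factors the intersection form: it observes that, since $\overline{{\rm Amp}}\,(X)$ is a strictly convex cone with two boundary rays ${\mathbf R}_{\ge 0}x_1$, ${\mathbf R}_{\ge 0}x_2$, the square $(g^*)^2$ of any $g\in {\rm Aut}\,(X)$ fixes each ray with positive eigenvalues $\alpha,\beta$ satisfying $\alpha\beta=\det\,(g^*)^2=1$; invariance of $(x^n)_X$ then gives $\alpha^k\beta^{n-k}=1$ for every $k$ with $(x_1^kx_2^{n-k})_X\neq 0$, and unless the only such $k$ is $n/2$ --- which is precisely the excluded case $(x^n)_X=c\,q_X(x)^{n/2}$ --- this forces $\alpha=\beta=1$, so $r({\rm Aut}\,(X))$ has exponent at most $2$ and Burnside's theorem (Theorem \ref{burnside}) finishes. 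You instead classify the root configurations of the binary $n$-ic $F=(x^n)_X$ on ${\mathbf P}^1({\mathbf C})$ and bound the stabilizer of $F$ in ${\rm GL}\,({\rm NS}\,(X))$ case by case; the theorem's excluded hypothesis resurfaces for you as exactly the two equal-multiplicity configurations (conjugate roots, or two real roots each of multiplicity $n/2$), and your Hodge-index exclusion of $F=cL^n$ plays the structural role of Proposition \ref{strict} in the paper, which is what guarantees that $x_1,x_2$ form a basis there. Your route buys independence from the cone structure (only one ample class is used), avoids Burnside, and yields an explicit bound on the image of ${\rm Aut}\,(X)$ in ${\rm GL}\,({\rm NS}\,(X))$ together with a complete description of when the stabilizer of the form is infinite; the paper's route is shorter and, because it only uses that the group acts on a strictly convex invariant cone, transfers verbatim to ${\rm Bir}\,(X)$ acting on $\overline{{\rm Mov}}\,(X)$ (Proposition \ref{finite3}(3)), which is needed later for the hyperk\"ahler results. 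A cosmetic point: the finiteness of $\ker r$ is established in the paper by the Hilbert-scheme argument of Proposition \ref{finite} using $H^0(T_X)=0$, rather than by Matsusaka--Mumford, but the content is the same.
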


We shall prove Theorem (\ref{main1}) in Section 3, which is extremely simple. 

Recall that there is a quadratic form $q_X(x)$ on ${\rm NS}\,(X)$ such that $(x^n)_X = C(q_X(x))^{n/2}$ when $X$ is a hyperk\"ahler manifold. Namely, $q_X(x)$ is the Beauville-Bogomolov form, $C$ is the Fujiki constant, and the relation is the Fujiki relation. Theorem (\ref{main1}) makes a sharp contrast to the following Theorem (\ref{abc}) and Proposition (\ref{main3}):

\begin{theorem}\label{abc}
Let $X$ be a projective hyperk\"ahler manifold with $\rho(X) = 2$. Then: 
\begin{enumerate}
\item 
Either both boundary rays of $\overline{{\rm Amp}}\, (X)$ are rational and ${\rm Aut}\, (X)$ is a finite group or both boundary rays of $\overline{{\rm Amp}}\, (X)$ are irrational and ${\rm Aut}\, (X)$ is an infinite group. Moreover, in the second case, $\overline{{\rm Amp}}\, (X) = \overline{{\rm Mov}}\, (X) = \overline{P(X)}$, the closure of the positive cone with respect to the Beauville-Bogomolov form $q_X(x)$, and ${\rm Bir}\, (X) = {\rm Aut}\, (X)$. 

\item 
Either both boundary rays of the movable cone $\overline{{\rm Mov}}\, (X)$ are rational and ${\rm Bir}\, (X)$ is a finite group or both boundary rays of $\overline{{\rm Mov}}\, (X)$ are irrational and ${\rm Bir}\, (X)$ is an infinite group.
\item 
If the boundary rays of $\overline{{\rm Mov}}\, (X)$ are rational, then the boundary rays of ${\rm Aut}\, (X)$ are also rational. 
 
\item Both cases in both (1) and (2) are realizable in dimension $4$. 
\end{enumerate}
\end{theorem}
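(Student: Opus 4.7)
The plan is to combine two classical inputs available for a projective hyperk\"ahler manifold $X$. First, the natural representations $\rho_a : {\rm Aut}\,(X) \to {\rm O}({\rm NS}\,(X))$ and $\rho_b : {\rm Bir}\,(X) \to {\rm O}({\rm NS}\,(X))$ have finite kernels, and their images have finite index respectively in the stabilizers of $\overline{{\rm Amp}}\,(X)$ and $\overline{{\rm Mov}}\,(X)$ inside the subgroup ${\rm O}^+({\rm NS}\,(X))$ of isometries preserving the positive cone $\overline{P(X)}$; this is the Markman--Huybrechts correspondence. Second, setting $L := {\rm NS}\,(X)$, the Beauville--Bogomolov form $q_X$ is a rank $2$ integral form on $L$ of signature $(1,1)$, so the theory of binary indefinite forms applies directly.

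The core step is the following dichotomy for ${\rm O}^+(L)$: either (A) $q_X$ represents zero over $\Q$ --- equivalently, both isotropic rays of $\overline{P(X)}$ are spanned by primitive elements of $L$ --- in which case ${\rm O}^+(L)$ has order at most $2$; or (B) $q_X$ is anisotropic over $\Q$, the two isotropic rays are irrational, and ${\rm O}^+(L)$ is infinite, its non-torsion part coming from the fundamental unit of the associated real quadratic order via Pell's equation. Next I would prove the structural claim that every boundary ray $R$ of $\overline{{\rm Amp}}\,(X)$ is either isotropic for $q_X$ or is the $q_X$-orthogonal complement in $L \otimes \R$ of a rational wall-divisor class; in the latter situation $R$ is automatically rational, being the orthogonal complement of a rational vector in a rational form on a rank $2$ space. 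Consequently, if a boundary ray of $\overline{{\rm Amp}}\,(X)$ is irrational, it must be isotropic, and since $L$ has rank $2$ the opposite boundary ray is then also forced to be isotropic, whence $\overline{{\rm Amp}}\,(X) = \overline{P(X)}$ and we are in case (B).

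Combining with the Markman--Huybrechts dictionary: in the rational case the stabilizer of $\overline{{\rm Amp}}\,(X)$ in ${\rm O}^+(L)$ fixes two independent rational rays and so acts by $\pm 1$ on each, giving a group of order at most $4$, hence $|{\rm Aut}\,(X)| < \infty$; in the irrational case the stabilizer is all of ${\rm O}^+(L)$, which is infinite under (B), hence $|{\rm Aut}\,(X)| = \infty$. The ``moreover'' clause of (1) follows because $\overline{{\rm Amp}}\,(X) \subseteq \overline{{\rm Mov}}\,(X) \subseteq \overline{P(X)}$ (Huybrechts) collapses to equalities, and any birational self-map, being a pseudo-isomorphism, is biregular once it preserves the ample cone --- giving ${\rm Bir}\,(X) = {\rm Aut}\,(X)$. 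This establishes (1); (2) follows by exactly the same argument applied to $\rho_b$ and $\overline{{\rm Mov}}\,(X)$ in place of $\rho_a$ and $\overline{{\rm Amp}}\,(X)$.

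Part (3) is immediate from Markman's chamber decomposition of $\overline{{\rm Mov}}\,(X)$ into nef cones of the finitely many birational models of $X$: all interior walls are orthogonal to rational classes, so rationality of the boundary rays of $\overline{{\rm Mov}}\,(X)$ forces rationality of the boundary rays of every chamber, in particular of $\overline{{\rm Amp}}\,(X)$. Part (4), realizability in dimension $4$, is where I expect the main obstacle to lie: one must exhibit hyperk\"ahler fourfolds of Picard number $2$ realizing each of the four combinations of (in)finiteness of ${\rm Aut}$ and ${\rm Bir}$. The natural route is to prescribe an admissible rank $2$ sublattice $(L, q_X)$ of the $K3^{[2]}$-lattice and invoke Markman's surjectivity of the period map to realize it as ${\rm NS}\,(X)$ for some deformation of a Hilbert scheme. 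Cases (A) versus (B) are then separated by whether a Pell equation in the discriminant of $q_X$ has a nontrivial integer solution, while distinguishing $\overline{{\rm Amp}}\,(X)$ from $\overline{{\rm Mov}}\,(X)$ requires a careful census of Markman's wall-divisor classes in $L$, which has to be calibrated by hand for each example.
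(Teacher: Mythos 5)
Your overall architecture (a rank-two lattice dichotomy; rational boundary rays force the image of the group in ${\rm GL}({\rm NS}(X))$ to have exponent $\le 2$, hence finiteness; irrational rays force the boundary to be isotropic, hence $\overline{{\rm Amp}}(X)=\overline{{\rm Mov}}(X)=\overline{P(X)}$ and ${\rm Bir}(X)={\rm Aut}(X)$) matches the paper, and the easy half (rational $\Rightarrow$ finite) is essentially the paper's Proposition (\ref{finite3}). But the hard half rests on two inputs you assert without proof. First, the claim that ${\rm Aut}(X)^*$ has finite index in the stabilizer of $\overline{{\rm Amp}}(X)$ inside ${\rm O}^+({\rm NS}(X))$ is not a consequence of the Huybrechts--Verbitsky--Markman Torelli package: an isometry of ${\rm NS}(X)$ need not extend to a Hodge monodromy operator on $H^2(X,\mathbf{Z})$, and the finite-index statement for the \emph{nef} cone is essentially the Kawamata--Morrison cone conjecture for hyperk\"ahler manifolds, not available here. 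The only result of this type the paper can invoke is Markman's fundamental-domain theorem for ${\rm Bir}(X)$ acting on $\overline{{\rm Mov}}(X)$ (Theorem (\ref{hk2})); accordingly the paper deduces infiniteness of ${\rm Bir}(X)$ from irrationality of the boundary of $\overline{{\rm Mov}}(X)$ (a finite group would make $\overline{{\rm Mov}}(X)$ a finite union of rational polyhedral cones, hence rational), and only afterwards identifies ${\rm Bir}(X)$ with ${\rm Aut}(X)$ using $\overline{{\rm Amp}}(X)=\overline{{\rm Mov}}(X)$. You should route the infiniteness of ${\rm Aut}(X)$ through ${\rm Bir}(X)$ and the movable cone in the same way.

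Second, and more seriously, your step ``if one boundary ray of $\overline{{\rm Amp}}(X)$ is irrational then, since $L$ has rank $2$, the opposite ray is also isotropic'' does not follow: your structural claim leaves open the mixed configuration in which one nef ray is irrational isotropic and the other is rational, and the structural claim itself (nef walls are cut out by rational wall-divisor classes) is exactly the wall-divisor/MBM theory, which is not among the paper's inputs. Excluding the mixed case is the content of the paper's Lemma (\ref{hk5}), whose proof is genuinely nontrivial: assuming one nef ray rational (so ${\rm Aut}(X)$ is finite) and the other irrational, the chambers $g_k^*\Delta$ of Theorem (\ref{hk2}) must accumulate on the irrational ray, hence eventually lie inside $\overline{{\rm Amp}}(X)$, producing infinitely many distinct elements in a single coset of ${\rm Aut}(X)$ inside ${\rm Bir}(X)$ --- a contradiction. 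Nothing in your outline substitutes for this step. Finally, part (4) is only a plan in your write-up: the paper realizes the rational case by ${\rm Hilb}^2S$ for a K3 surface $S$ with $\rho(S)=1$ (the Hilbert--Chow exceptional divisor has negative Beauville--Bogomolov square, so Proposition (\ref{referee}) applies), and the irrational case by a deformation of ${\rm Hilb}^2S$ with a prescribed rank-two N\'eron--Severi lattice containing no classes of square $-2$, $-10$ or $0$, where Hassett--Tschinkel's Theorem 23 yields $\overline{{\rm Amp}}(X)=\overline{P(X)}$; your ``census of wall divisors'' still has to be carried out before (4) can be considered proved.
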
 

\begin{proposition}\label{main3} There is a Calabi-Yau threefold in the strict sense $X$ with $\rho(X) = 2$ such that 
both boundary rays of $\overline{{\rm Mov}}\, (X)$ are irrational and 
${\rm Bir}\, (X)$ is an infinite group. 
\end{proposition}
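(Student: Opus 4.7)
The plan is to exhibit an explicit strict Calabi-Yau threefold $X$ with $\rho(X) = 2$ whose birational automorphism group contains a pseudo-automorphism of infinite order acting hyperbolically on ${\rm NS}(X)$; the irrationality of the boundary rays of $\overline{{\rm Mov}}(X)$ will then follow automatically from the rank-two geometry.

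First, I would construct $X$ so that it admits a small contraction to a nodal threefold $X_0$, with flopped model $X^+$ abstractly isomorphic to $X$. Composing the flop with such an isomorphism $X^+\simeq X$ then produces a genuinely non-biregular element $\varphi \in {\rm Bir}(X)$. Candidates include smooth Calabi-Yau complete intersections in toric Fano fourfolds of Picard number $2$ carrying a symmetry that interchanges the two primitive nef classes, or small projective resolutions of suitable nodal Calabi-Yau threefolds; concrete examples should be extractable from the explicit families cited in the introduction (for instance those of Schoen or of Kapustka).

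Next, I would compute the action of $\varphi^*$ on ${\rm NS}(X) \cong \Z^2$. Since $\varphi$ is a pseudo-automorphism, $\varphi^*$ preserves the cubic form $F(x) = (x^3)_X$; with $\rho(X)=2$, $\varphi^*$ lies in the subgroup $G \subset {\rm GL}(2,\Z)$ preserving the binary cubic $F$, and $G$ is finite unless $\varphi^*$ is hyperbolic. By choosing the family so that $|{\rm tr}\, \varphi^*| > 2$, one obtains two distinct real eigenvalues $\lambda$ and $\lambda^{-1}$ which, being roots of an integer quadratic polynomial of non-square discriminant, are irrational quadratic units. Hence $\varphi$ has infinite order and ${\rm Bir}(X) \supset \langle \varphi \rangle$ is infinite, giving the first assertion.

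Finally, ${\rm Bir}(X)$ preserves $\overline{{\rm Mov}}(X) \subset {\rm NS}(X)_\R \simeq \R^2$, so the unordered pair of boundary rays of $\overline{{\rm Mov}}(X)$ is $\varphi^*$-invariant. A hyperbolic element of ${\rm GL}(2,\R)$ has exactly two invariant lines in $\R^2$, namely its two eigenspaces. Passing to $\varphi^2$ if necessary so that each boundary ray is fixed individually, we conclude that the two boundary rays of $\overline{{\rm Mov}}(X)$ coincide with the two eigenrays of $\varphi^*$, which are irrational by the previous step.

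The real obstacle is the construction step: producing an honest strict Calabi-Yau threefold with $\rho(X)=2$ that admits such a pseudo-automorphism, with simple connectedness, vanishing of the intermediate $h^{p,0}$, the Picard rank, and the hyperbolic character of $\varphi^*$ all verified simultaneously. My plan is to identify such an example within the cited families, write down the cubic intersection form $F$ on ${\rm NS}(X) \cong \Z^2$ explicitly, and then verify by direct linear-algebraic computation that the matrix of $\varphi^*$ is hyperbolic with integer trace of absolute value $>2$.
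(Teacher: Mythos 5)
Your overall strategy is the same as the paper's: find a strict Calabi--Yau threefold of Picard number $2$ admitting small contractions whose flops generate a birational self-map $\varphi$ with $\varphi^*$ hyperbolic on ${\rm NS}(X)\cong\Z^2$, then deduce that ${\rm Bir}(X)$ is infinite and that the boundary rays of $\overline{{\rm Mov}}(X)$ are the (irrational) eigenrays. Your ``soft'' final step is in fact correct and slightly slicker than what the paper does: since $\overline{{\rm Mov}}(X)$ is a closed strictly convex cone with nonempty interior (Proposition~\ref{strict}) preserved by $\varphi^*$, its two boundary rays span $(\varphi^*)^2$-invariant lines, which for a hyperbolic integer matrix must be the two eigenlines, and an eigenline for an irrational eigenvalue of an integer matrix cannot be rational. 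The paper instead computes $\overline{{\rm Mov}}(X)$ exactly by running the log MMP and invoking Kawamata's decomposition of birational maps into flops, which yields more (the full structure of ${\rm Bir}(X)$ and the precise cone) than the Proposition requires.

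The genuine gap is that you never produce the example, and for an existence statement that is the entire content. You explicitly defer the hard part (``concrete examples should be extractable from the explicit families cited''), but everything nontrivial lives there: one must exhibit $X$, verify $\rho(X)=2$ and strictness, verify that the relevant contractions are \emph{small} (not divisorial and not finite), show the flopped models are isomorphic to $X$ so that one actually gets elements of ${\rm Bir}(X)$, and compute the induced action on ${\rm NS}(X)$ to check hyperbolicity. The paper does all of this for a general $(1,1)+(1,1)+(2,2)$ complete intersection in $\mathbf P^3\times\mathbf P^3$: the two degree-$2$ projections are shown to be small contractions by an explicit analysis of the four $\mathbf P^2$-fibers of the associated $(1,1)+(1,1)$ fourfold, the covering involutions $\tau_1,\tau_2$ are identified with the flops, and $\tau_1^*\tau_2^*$ is computed via the projection formula to have trace $34$. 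Separately, one claim in your outline is false: a pseudo-automorphism of a Calabi--Yau threefold does \emph{not} preserve the cubic form $(x^3)_X$ (only biregular automorphisms do; flops change the triple intersection numbers -- in the paper's example $\tau_1^*$ visibly fails to preserve the cubic). This does not sink your argument, since the steps you actually need (infinite order from $|{\rm tr}|>2$ and $\det=\pm1$; irrationality of eigenrays) do not use it, but the assertion should be removed. As written, the proposal is a plausible roadmap rather than a proof.
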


Theorem (\ref{abc}) is a generalization of a result of 
Kov\'acs (\cite{Ko94}) 
for K3 surfaces. Theorem (\ref{abc}) (1), (2) are proved in Section 4 by using Markman's solution (\cite{Ma11}) of weak version of the movable cone conjecture, after the global Torelli type results for hyperk\"ahler manifolds due to Huybrechts and Verbitsky (\cite{Hu99}, \cite{Ve09}, \cite{Hu11}). Theorem (\ref{abc}) (3) is proved in Section 5 by using the surjectivity of the period map for hyperk\"ahler manifold due to Huybrechts (\cite{Hu99}) and 
a result of Hassett and Tschinkel (\cite{HT09}). See also 
Corollary (\ref{ex1}) for Hilbert schemes of points on K3 surfaces and generalized Kummer varieties, of Picard number $2$. We prove Proposition (\ref{main3}) by constructing 
an explicit example in Section 6 (Proposition (\ref{ex3})). After posting this note on ArXiv, Professor Kota Yoshioka kindly informed to me that he also constructed hyperk\"ahler manifolds, which are deformation equivalent to generalized Kummer varieties, of Picard number $2$ and of any dimensions with irrational movable cones and those with rational movable cones. They are constructed as the Bogomolov factor $K_{(sH,tH)}(v)$ of the moduli space $M_{(sH,tH)}(v)$ of semi-stable objects with respect to Bridgelad's stabiliy condition $\sigma_{(sH, tH)}$ with Mukai vector $v$ on a polarized abelian surface $(A, H)$ of Picard number $1$ (\cite[Theorem 4.14]{Yo12}). We also note that Hassett and Tschinkel constructed a hyperk\"ahler fourfold $X$ such that $\vert {\rm Aut}\, (X) \vert = 1$ and $\vert {\rm Bir}\, (X) \vert = \infty$, as the Fano scheme of a special cubic fourfold (\cite[Theorem 7.4, Remark 7.6]{HT10}). This result is also kindly 
informed to me by Professor Yuri Tschinkel after posting my note on ArXiv.

Let us return back to the relation of Theorem (\ref{main1}) and Question (\ref{sano}). 
There is a famous conjecture, called the {\it cone conjecture}, due to Kawamata and Morrison (\cite{Mo93}, \cite{Ka97}):
 
\begin{conjecture}\label{kawamata-morrison}
Let $X$ be a Calabi-Yau manifold. Then: 
\begin{enumerate}
\item 
The natural action of ${\rm Aut}\,(X)^*$ on the nef effective cone $\overline{{\rm Amp}}^e\, (X)$ has a finite rational polyhedral fundamental domain. 
\item 
The natural action of ${\rm Bir}\,(X)^*$ on the effective movable cone $\overline{{\rm Mov}}^e\, (X)$ has a finite rational polyhedral fundamental domain. 
\end{enumerate}
Here and hereafter, ${\rm Aut}\,(X)^*$ (resp. ${\rm Bir}\,(X)^*$) is the natural action of ${\rm Aut}\,(X)$ (resp. ${\rm Bir}\,(X)^*$) on the N\'eron-Severi group ${\rm NS}\, (X)$. 
\end{conjecture}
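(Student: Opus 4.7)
The plan is to reduce to the Picard number two setting studied throughout this note and attack each part separately by combining Theorems \ref{main1} and \ref{abc} with standard positivity results. In Picard number two, ${\rm NS}(X) \otimes \R \simeq \R^2$, so each of the cones $\overline{{\rm Amp}}^e(X)$ and $\overline{{\rm Mov}}^e(X)$ is at most two-dimensional and determined by its two extremal rays. A finite rational polyhedral fundamental domain must then be either the whole cone, when the acting group is finite and both rays are rational, or a ``strip'' bounded by a rational class and its image under a generator of the acting group modulo torsion, when the group is infinite and both boundary rays are irrational.

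For part (1), Theorem \ref{main1} shows that ${\rm Aut}(X)$ is finite whenever $n$ is odd, or $n$ is even and no Fujiki-type relation holds; in these cases (1) reduces to showing that $\overline{{\rm Amp}}^e(X)$ is rational polyhedral, i.e.\ that both extremal rays are rational and contain effective classes. Any extremal ray corresponding to a Mori-type contraction is rational and effective by the base-point-free theorem together with Kawamata--Viehweg vanishing. In the remaining hyperk\"ahler case, Theorem \ref{abc}(1) already covers both regimes: in the finite regime both rays are rational, and in the infinite regime a generator of ${\rm Aut}(X)^*$ modulo torsion supplies the rational wall that bounds the fundamental strip (its irrational eigenvectors are precisely the boundary rays of $\overline{{\rm Amp}}^e(X)$). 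Part (2) is attacked in the same way, replacing Theorem \ref{abc}(1) by (2) and using the movable analogue (small contractions and flops) for the remaining non-hyperk\"ahler Calabi-Yau cases.

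The hard part will be the rationality of the extremal rays in the finite-group regime for a strict Calabi-Yau threefold with $\rho(X)=2$ and ${\rm Aut}(X)$ finite: producing a contraction or fibration contracting each extremal ray of $\overline{{\rm Amp}}^e(X)$ is essentially Question \ref{sano} and is the content of the conjecture in this case, since $K_X = 0$ renders the usual cone theorem inapplicable to these extremal rays. Thus the ``proof'' offered here is in reality a reduction: Theorems \ref{main1} and \ref{abc} dispatch the infinite-${\rm Aut}$ and hyperk\"ahler regimes, isolating the obstruction as a genuine contraction-theoretic statement for Calabi-Yau threefolds of Picard number two, which at present remains open.
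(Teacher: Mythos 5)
This statement is Conjecture (\ref{kawamata-morrison}), the Kawamata--Morrison cone conjecture: the paper does not prove it, states it explicitly as an open conjecture with references to \cite{Mo93} and \cite{Ka97}, and everything the paper deduces from it (Corollary (\ref{cor1})) is explicitly conditional on its truth. So there is no ``paper's own proof'' to compare against, and your proposal cannot be a correct proof of the statement.

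Beyond that, the proposal has concrete gaps even as a reduction. First, the conjecture is asserted for Calabi-Yau manifolds of \emph{arbitrary} Picard number; your opening move of reducing to $\rho(X)=2$ is unjustified and discards almost all of the content. Second, within the $\rho(X)=2$ setting your argument is circular: the rationality of the boundary rays of $\overline{{\rm Amp}}\,(X)$ is precisely what Corollary (\ref{cor1})(1) \emph{derives from} the conjecture, so it cannot be an input to a proof of the conjecture. Your appeal to the base-point-free theorem and Kawamata--Viehweg vanishing fails for exactly the reason you later concede: since $K_X=0$, every nef class is $K_X$-trivial and the cone theorem gives no rationality or contractibility on the nef boundary; Theorem (\ref{kawamata}) only handles the big part of the boundary. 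Third, in the hyperk\"ahler regime with infinite ${\rm Aut}\,(X)$, Theorem (\ref{abc})(1) tells you the boundary rays are irrational and $\overline{{\rm Amp}}\,(X)=\overline{P(X)}$, but it does not by itself produce a rational wall for a fundamental domain of the ${\rm Aut}\,(X)^*$-action; the paper obtains the existence of such a domain for the \emph{movable} cone only by quoting Markman's theorem (Theorem (\ref{hk2})), which is itself a (weak) solved case of part (2) of the conjecture, not a consequence of the paper's own arguments. What you have written is, as you acknowledge in your final paragraph, a restatement of where the difficulty lies rather than a proof; the honest conclusion is that the statement is and remains a conjecture.
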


There seems no known example of Calabi-Yau threefold with $\rho(X) = 2$ having 
an irrational boundary of $\overline{{\rm Amp}}\, (X)$. In fact, an affirmative answer to 
Conjecture (\ref{kawamata-morrison}) (1) with Theorem (\ref{main1}) would imply:
\begin{corollary}\label{cor1}

\begin{enumerate}
\item
Let $X$ be an odd dimensional Calabi-Yau manifold with $\rho(X) = 2$. Assume that the cone conjecture (\ref{kawamata-morrison}) (1) is true for this $X$. Then, both boundary rays of $\overline{{\rm Amp}}\, (X)$ are rational. 
\item 
Let $X$ be a Calabi-Yau threefold in the strict sense with $\rho(X) = 2$. 
Assume that the cone conjecture (\ref{kawamata-morrison}) (1) is true for 
this $X$. Then, $X$ contains a rational curve.
\end{enumerate}
\end{corollary}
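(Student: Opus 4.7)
My plan is to derive both assertions directly from Theorem \ref{main1}(1) together with the hypothesis of Conjecture \ref{kawamata-morrison}(1). For \textbf{(1)}, since $n$ is odd, Theorem \ref{main1}(1) gives that ${\rm Aut}(X)$, and hence its image ${\rm Aut}(X)^*$ in ${\rm GL}({\rm NS}(X))$, is finite. The cone conjecture then provides a rational polyhedral fundamental domain $\Pi$ for this finite group acting on $\overline{{\rm Amp}}^e(X)$, so that $\overline{{\rm Amp}}^e(X) = \bigcup_{g \in {\rm Aut}(X)^*} g\cdot\Pi$ is a finite union of rational polyhedral cones, hence itself rational polyhedral. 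In Picard number $2$ this two-dimensional cone has exactly two rational boundary rays; since rational ample classes are dense in $\overline{{\rm Amp}}(X)$, one has $\overline{{\rm Amp}}^e(X) = \overline{{\rm Amp}}(X)$, and the conclusion follows.

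For \textbf{(2)}, I would apply part (1) (noting $n=3$ is odd) to obtain primitive integral nef classes $D_1, D_2$ spanning the two boundary rays of $\overline{{\rm Amp}}(X)$. For $D = D_i$, since $(X, 0)$ is Kawamata log terminal with $K_X = 0$, the base-point-free theorem (after a small ample perturbation if $D$ is not already big) will produce a morphism $\phi = \phi_D : X \to Y$ given by $|mD|$ for $m \gg 0$, onto a normal projective variety $Y$; as $D$ lies on the boundary of $\overline{{\rm Amp}}(X)$, $\phi$ is not an isomorphism. If $\phi$ is birational, its exceptional locus is non-empty and covered by rational curves by the classification of extremal contractions on smooth threefolds. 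If $\phi$ is a fibration, the strict Calabi-Yau conditions $h^1(\sO_X) = h^2(\sO_X) = 0$ and $\pi_1(X) = 1$ tightly constrain the base and the general fibers; I would then argue by cases (Euler characteristic/monodromy for elliptic fibrations; non-isotriviality together with semi-stable reduction for K3 or abelian surface fibrations) to produce a rational curve in a singular fiber.

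The hardest step will be the fibration case, most delicately a potential abelian surface fibration over $\BP^1$: I would need to use $\pi_1(X) = 1$ to rule out a smooth such fibration (forcing the existence of degenerate fibers) and then extract $\BP^1$-components from the resulting semi-stable, toroidal degenerations. A cleaner alternative would be to cite directly an extremal-contraction result of Wilson/Oguiso type for Calabi-Yau threefolds, which bypasses the case analysis and immediately yields a rational curve on any rational extremal contraction.
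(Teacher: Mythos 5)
Your proof of part (1) is correct and is essentially the paper's own argument: finiteness of ${\rm Aut}\,(X)$ from Theorem (\ref{main1})(1), plus the finite rational polyhedral fundamental domain supplied by the cone conjecture, exhibits $\overline{{\rm Amp}}^e(X)$ as a finite union of rational polyhedral cones, hence rational polyhedral, and its closure $\overline{{\rm Amp}}\,(X)$ therefore has two rational boundary rays. (A minor point: you need not, and should not, assert $\overline{{\rm Amp}}^e(X)=\overline{{\rm Amp}}\,(X)$ via density of rational ample classes; it suffices to pass to closures, since the closure of a rational polyhedral cone is rational polyhedral.)

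Part (2) is where there is a genuine gap. Your plan hinges on producing a non-trivial contraction $\phi_D$ from a primitive integral class $D$ spanning a boundary ray. With $K_X=0$, the base-point-free theorem requires $aD-K_X=aD$ to be nef \emph{and big}, so it applies only when $(D^3)_X>0$ --- and in that case Theorem (\ref{kawamata}) already handles the ray. When $(D^3)_X=0$, which is the case you cannot exclude, semi-ampleness of $D$ is an abundance-type statement and is precisely the hard point; your proposed remedy of ``a small ample perturbation'' does not help, since perturbing $D$ into the ample cone makes $\phi$ an embedding and loses all contact with the boundary. Even granting the existence of a fibration, your extraction of a rational curve from degenerate fibers (most seriously for an abelian surface fibration over ${\mathbf P}^1$) is left as an acknowledged open case. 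The paper sidesteps all of this by quoting \cite[Theorem 5.1]{Og93}: a Calabi-Yau threefold in the strict sense contains a rational curve as soon as there is a non-zero, non-ample nef line bundle $L$ with $(L.c_2(X))\neq 0$. Strictness enters via \cite[Corollary 4.5]{Ko87}, which guarantees that the linear form $(\ast, c_2(X))$ is not identically zero on ${\rm NS}\,(X)$; since $\rho(X)=2$, this form vanishes on at most one of the two boundary rays, and part (1) supplies a primitive integral class $L$ on a ray where it does not vanish. You should either invoke such a result of Oguiso/Wilson type (as you suggest in your final sentence --- that alternative is in fact the paper's actual proof, and the $c_2$-nonvanishing input is the missing ingredient you would still need) or be prepared to reprove the non-big case from scratch, which is a substantial piece of work rather than a routine case analysis.
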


Corollary (\ref{cor1})(2) is suggested by Professor Paolo Cascini after my 
talk relevant to this work at the conference celebrating the 65-th birthday of Professor Fador Bogolomov at Nantes, France, May 2012. We prove Corollary (\ref{cor1}) in Section 3.

The cone conjecture and existence of rational curve are generally believed to be true at least for Calabi-Yau threefolds in the strict sense. So, Corollary (\ref{cor1}) suggests that the following more explicit form of a question of Doctor Taro Sano could be affirmative: 

\begin{question}\label{sano2}
Let $X$ be a Calabi-Yau threefold in the strict sense with $\rho(X) = 2$. 
\begin{enumerate}
\item
Is $\overline{{\rm Amp}}\, (X)$ always a rational polyhedral cone?
\item 
Is there a rational curve on $X$?
\end{enumerate}
\end{question}

There are Calabi-Yau threefolds $X$ with $\rho(X) = 2$ such that $X$ is an \'etale quotient of a complex torus (\cite[Theorem 01]{OS01}). For such $X$, $\overline{{\rm Amp}}\, (X)$ is always rational polyhedral (\cite[Theorem 01]{OS01}) but there is no rational curve on it. So, our restriction to Calabi-Yau threefold {\it in the strict sense} is harmless for (1) but definitely necessary for (2). 
For relevant work related to (2), see \cite{Wi89}, \cite{Wi92}, \cite{HW92}, \cite{Og93}, \cite{OP98}, \cite{DF11}. 

{\bf Acknowledgement.} I would like to express my thanks to Doctor Taro Sano, 
Professors Fador Bogomolov, Serge Cantat, Paolo Cascini, Yujiro Kawamata, Shinosuke Okawa, Thomas Peternell, De-Qi Zhang for valuable discussions relevant to this work, and to Professors Yuri Tschinkel and Kota Yoshioka for their very important informations. I would like to express my thanks to the referee for very careful reading and several important improvement. Especially, Proposition (\ref{referee}) and its proof are kindly taught me by the referee. 

\section{Preliminaries.}

In this section, we recall the notion of various cones 
from \cite{Ka88}, \cite{Ka97} and a few well known results. For simplicity, $X$ is assumed to be a Calabi-Yau manifold. We consider various cones in ${\rm NS}\, (X)_{{\mathbf R}}$, 
the ${\mathbf R}$-linear extension of 
${\rm Pic}\, (X) \simeq {\rm NS}\, (X)$, with topology of finite dimensional ${\mathbf R}$-linear space and ${\mathbf Z}$-structure given by ${\rm NS}\, (X)$. 
The natural (contravariant) group homomorphism 
$$r : {\rm Bir}\, (X) \to {\rm GL}\, ({\rm NS}\, (X))\,\, ;\,\, g \mapsto g^*$$
is well-defined. This is because 
${\mathcal O}_X(K_X) \simeq {\mathcal O}_X$ 
so that any $g \in {\rm Bir}\, (X)$ acts on $X$ isomorphic in codimension $1$ (see eg. \cite[Page 420]{Ka08}). 

The nef cone $\overline{\rm Amp}\, (X)$ is the closure of the convex hull 
of ample divisor classes. The interior ${\rm Amp}\, (X)$ of $\overline{\rm Amp}\, (X)$ consists of ${\mathbf R}$-ample divisor classes. 
Note that $r({\rm Aut}\, (X))$ induces linear automorphisms of $\overline{\rm Amp}\, (X)$. 

A divisor $D$ is movable if the complete linear system 
$\vert mD \vert$ has no fixed component for some positive integer $m$. All ample divisor classes are movable. The movable cone $\overline{\rm Mov}\, (X)$ is the closure of the convex hull of movable divisor classes. Note that $r({\rm Bir}\, (X))$ induces linear automorphisms of $\overline{\rm Mov}\, (X)$. For a rational point of the interior ${\rm Mov}\, (X)$ 
of $\overline{\rm Mov}\, (X)$, we have the following result due to Kawamata (\cite[Lemma 2.2]{Ka88}):
\begin{proposition}\label{mov}
Any divisor $D$ whose class is in ${\rm Mov}\, (X)$ is movable. 
\end{proposition}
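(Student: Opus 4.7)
The plan is to realize the rational class $[D] \in {\rm Mov}(X)$ as a positive rational combination of classes of genuinely movable divisors, and then to show that the integer divisor $D$ itself becomes movable after passing to a suitable positive multiple.

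First I would decompose $[D]$. By definition, $\overline{\rm Mov}(X)$ is the closure of the convex hull of the (automatically rational) movable divisor classes, and the standard convex-analysis fact that ${\rm int}(\overline{\rm conv}\, S) = {\rm int}({\rm conv}\, S)$ whenever the interior is nonempty shows that $[D]$ lies in the honest convex hull $\mathrm{conv}\{[M] : M \text{ movable}\}$. Since the defining linear equations are rational, a solution with strictly positive coefficients over $\R$ can be chosen rational, yielding movable divisors $M_1, \ldots, M_k$ and positive rationals $a_1, \ldots, a_k$ with
\[
[D] = \sum_{i=1}^{k} a_i [M_i].
\]
Clearing denominators produces positive integers $N$ and $n_i$ such that $N[D] = \sum_i n_i [M_i]$ in ${\rm NS}(X)$. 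The Calabi--Yau hypothesis $h^1(\sO_X)=0$, which as recorded in the introduction gives ${\rm Pic}(X) \simeq {\rm NS}(X)$, lifts this numerical identity to a genuine linear equivalence of divisors $ND \sim \sum_i n_i M_i$.

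Next I would establish that a positive integer combination of movable divisors is again movable. For each $i$ choose $p_i>0$ so that $|p_i M_i|$ has no fixed component, and let $p$ be a common multiple of the $p_i$. Given any prime divisor $F \subset X$, since $|p n_i M_i|$ is fixed-component-free one can pick $E_i \in |p n_i M_i|$ with ${\rm mult}_F E_i = 0$ for every $i$. Then $E_1+\cdots+E_k \in |p\sum_i n_i M_i|$ has multiplicity zero along $F$, so no prime divisor can be a fixed component of $|p\sum_i n_i M_i|$. Transporting this along the linear equivalence $pND \sim p\sum_i n_i M_i$ shows that $|pND|$ is fixed-component-free, which by definition means $D$ is movable.

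The main delicate point is the lift from the numerical identity $N[D] = \sum_i n_i [M_i]$ to the actual linear equivalence of divisors $ND \sim \sum_i n_i M_i$; only this lift allows the absence of fixed components on the right hand side to be transferred to $|pND|$ on the left, and it is precisely at this step that the Calabi--Yau hypothesis $h^1(\sO_X)=0$ is essential. Without that assumption one would only obtain an effective divisor numerically equivalent to $pND$, and the conclusion would not directly apply to the linear system $|pND|$ itself.
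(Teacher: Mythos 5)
The paper does not actually prove this proposition; it quotes it from Kawamata (\cite[Lemma 2.2]{Ka88}). Your argument is the standard proof behind that citation and is essentially correct: pass from the interior of the closed cone to the interior of the cone genuinely generated by movable classes, make the positive coefficients rational, clear denominators, and use that a positive integral multiple or sum of fixed-component-free linear systems is again fixed-component-free. The one imprecise step is the claim that $N[D]=\sum_i n_i[M_i]$ lifts directly to a linear equivalence $ND\sim\sum_i n_iM_i$. The identity you derive holds in ${\rm NS}(X)_{\mathbf Q}$, hence in ${\rm Pic}(X)\simeq{\rm NS}(X)$ only up to a torsion class: the hypothesis $h^1({\mathcal O}_X)=0$ kills ${\rm Pic}^0(X)$ but not torsion in ${\rm NS}(X)$, and since the paper's Calabi--Yau manifolds need not be simply connected (e.g.\ the \'etale torus quotients mentioned in the introduction), such torsion can genuinely occur. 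This is harmless for the conclusion --- let $t$ be the order of the torsion element $ND-\sum_i n_iM_i$, replace $N$ by $tN$, and the rest of your argument runs verbatim --- but as written the sentence overstates what $h^1({\mathcal O}_X)=0$ buys you, and it is worth inserting that extra multiple explicitly.
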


The pseudo-effective cone $\overline{\rm Big}\, (X)$ is the closure of the convex hull of effective divisor classes. $\overline{\rm Big}\, (X)$ is also the closure of the convex hull of big divisor classes and the interior ${\rm Big}\, (X)$ of $\overline{\rm Big}\, (X)$ consists of ${\mathbf R}$-big divisor 
classes (\cite[Lemma 2.2]{Ka88}). 

By definition, $\overline{\rm Amp}\, (X) \subset \overline{\rm Mov}\, (X) \subset \overline{\rm Big}\, (X)$. 

We note that unlike the relative setting, the convex cone $\overline{\rm Big}\, (X)$ is {\it strict} in the sense that 
there contain no straight line through $0$, so that so 
are $\overline{\rm Mov}\, (X)$ and $\overline{\rm Amp}\, (X)$:
\begin{proposition}\label{strict}
$\overline{\rm Big}\, (X)$ is strict. 
\end{proposition}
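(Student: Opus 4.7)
The plan is to argue by contradiction: assume there exists $D \in {\rm NS}(X)_{\mathbf R}$ with $D \neq 0$ such that both $D$ and $-D$ lie in $\overline{{\rm Big}}(X)$, and derive $D = 0$. The strategy is to pair $D$ with sufficiently many nef classes so that the Hodge index theorem forces $D$ to vanish.

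First I would show that for any nef divisor classes $N_1, \ldots, N_{n-1}$, the cycle class $N_1 \cdots N_{n-1}$ defines a nef $1$-cycle, i.e., it pairs nonnegatively with every pseudo-effective divisor. For a prime divisor $E \subset X$, the restrictions $N_i|_E$ are nef on (a resolution of) $E$, and the top intersection of $n-1$ nef classes on an $(n-1)$-dimensional projective variety is nonnegative by Kleiman's theorem; the case of a general pseudo-effective $D'$ then follows by taking limits of effective ${\mathbf Q}$-divisors. Applying this observation both to $D$ and to $-D$, I obtain
\[
D \cdot N_1 \cdots N_{n-1} = 0
\]
for every choice of nef $N_1, \ldots, N_{n-1}$.

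Next, since $X$ is projective the ample cone is a nonempty open subset of ${\rm NS}(X)_{\mathbf R}$, so $(\text{nef cone})^{n-1}$ has nonempty interior in ${\rm NS}(X)_{\mathbf R}^{n-1}$. Since $(N_1, \ldots, N_{n-1}) \mapsto D \cdot N_1 \cdots N_{n-1}$ is a multilinear polynomial vanishing on this open set, it vanishes identically on ${\rm NS}(X)_{\mathbf R}^{n-1}$. Fixing an ample class $H$ and specializing to $N_1 = \cdots = N_{n-2} = H$ and $N_{n-1} = E$ arbitrary, I conclude
\[
D \cdot H^{n-2} \cdot E = 0 \quad \text{for every } E \in {\rm NS}(X)_{\mathbf R}.
\]

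Finally, the Hodge index theorem (or Hodge--Riemann) says that the symmetric bilinear form $\langle D', E' \rangle_H := D' \cdot E' \cdot H^{n-2}$ on $H^{1,1}(X, {\mathbf R})$ has signature $(1, h^{1,1}(X) - 1)$, with the positive direction spanned by $H$; its restriction to ${\rm NS}(X)_{\mathbf R}$, which contains $H$, therefore has signature $(1, \rho(X) - 1)$ and is non-degenerate. Thus $\langle D, E \rangle_H = 0$ for every $E \in {\rm NS}(X)_{\mathbf R}$ forces $D = 0$, the desired contradiction. The main conceptual obstacle I anticipate is not technical but strategic: one cannot work with $H^{n-1}$ alone, since $D \cdot H^{n-1} = 0$ together with pseudo-effectivity of $D$ does not imply $D = 0$ (pseudo-effective divisors can have highly negative self-intersections), and it is precisely the anti-pseudo-effectivity of $D$ that allows one to upgrade to vanishing against all products of $n-1$ nef classes, at which point Hodge index becomes applicable.
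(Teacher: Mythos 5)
Your proof is correct, and it follows the same overall strategy as the paper (use pseudo-effectivity of $\pm D$ to kill intersection numbers against ample classes, then invoke a Hodge index statement), but the two implementations differ in their key inputs. The paper only extracts $(h^{n-1}\cdot v)=0$ and $(h^{n-2}\cdot v^2)=0$ by expanding $((xh+v)^{n-1}\cdot v)=0$ as a polynomial in $x$ (a special case of your multilinearity-plus-openness argument), then restricts $v$ to a general complete intersection surface $S=H_1\cap\dots\cap H_{n-2}$, applies the classical surface Hodge index theorem there, and uses the Lefschetz hyperplane theorem to lift the conclusion $v|_S=\alpha h|_S$ back to $X$. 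You instead prove the stronger statement that $D$ annihilates \emph{all} products of $n-1$ nef classes (via nonnegativity of nef products on prime divisors and a limit/density argument), which lets you apply the higher-dimensional Hodge index theorem directly: the form $(D',E')\mapsto D'\cdot E'\cdot H^{n-2}$ has signature $(1,h^{1,1}-1)$ by Hodge--Riemann, hence is nondegenerate on ${\rm NS}(X)_{\mathbf R}$. Your route avoids the surface reduction and the Lefschetz theorem at the cost of invoking the Hodge--Riemann bilinear relations in dimension $n$ (a standard but less elementary fact than the surface Hodge index theorem) and Kleiman-type nonnegativity of nef intersection products; the paper's route is more hands-on and needs only the two vanishing coefficients it actually uses. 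Both are complete.
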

\begin{proof} Let $n = \dim\, X \ge 2$. Let 
$\pm v \in \overline{\rm Big}\, (X)$. Let $H$ be a very ample divisor 
and $h$ be its class. Then $xh+v$ are ${\mathbf R}$-ample classes for any large real numbers $x$. Considering rational approximation and limit, we obtain
$$((xh+v)^{n-1}.v)_X \ge 0\,\, ,\,\, ((xh+v)^{n-1}.-v)_X \ge 0\,\, ,$$
and therefore $((xh+v)^{n-1}.v)_X = 0$. By expanding the right hand side, we obtain
$$\sum_{k=0}^{n-1}C_k(h^{n-1-k}.v^{k+1})_Xx^{n-1-k} = 0\,\, ,$$
where $C_k > 0$ are binomial coefficients. 
Since $x$ can be any large numbers, it follows that this equality is an equality of polynomials of $x$. In particular,  
$$(h^{n-1}.v)_X = (h^{n-2}.v^2)_X = 0\,\, .$$
Let $H_i$ ($1 \le i \le n-2$) be general elements of $\vert H \vert$. 
Since $H$ is very ample, 
$$S := H_1 \cap H_2 \cap \cdots \cap H_{n-2}$$
is a smooth surface (and $S = X$ when $n=2$). By the formula above, we obtain
$$(h\vert S.v\vert S)_S = ((v \vert S)^2)_S = 0\,\, .$$
We have also $((h\vert S)^2)_S > 0$. Hence, by the Hodge index theorem, 
there is a real number $\alpha$ such that $v \vert S = \alpha h \vert S$ 
in ${\rm NS}\, (S)_{\mathbf R}$. On the other hand, by the Lefschetz hyperplane section theorem, the natural restriction morphism ${\rm NS}\,(X)_{\mathbf R} 
\to {\rm NS}\,(S)_{\mathbf R}$ is injective. Hence $v = \alpha h$ in ${\rm NS}\, (X)_{\mathbf R}$. Substituting this into the equality above, we obtain 
$\alpha (h^n)_X = 0$. Since $(h^n)_X > 0$, it follows that $\alpha = 0$. 
Hence $v = 0$ in ${\rm NS}\, (X)_{\mathbf R}$. This means that $\overline{\rm Big}\, (X)$ is strict.
\end{proof}

We also need the following important result again due to Kawamata \cite[Theorem 5.7]{Ka88}:
\begin{theorem}\label{kawamata}
$\overline{\rm Amp}\, (X) \cap {\rm Big}\,(X)$ is a locally rational polyhedral cone in ${\rm Big}\,(X)$.  
\end{theorem}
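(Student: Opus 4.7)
The plan is to apply the logarithmic Cone, Contraction, and Rationality theorems of Kawamata--Shokurov to a klt pair $(X,\Delta)$ obtained by perturbing the trivial canonical class $K_X \equiv 0$ by a small effective $\mathbf{Q}$-divisor. Since $K_X$ itself cannot drive the cone theorem (it is numerically trivial), the entire issue is to produce a good perturbation over the region of interest.

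Fix $h \in \overline{\rm Amp}(X) \cap {\rm Big}(X)$. By Kodaira's lemma, $h \equiv_{\mathbf Q} A + N$ with $A$ an ample $\mathbf{Q}$-divisor and $N$ an effective $\mathbf{Q}$-divisor. Any irreducible curve $C$ with $(h \cdot C) = 0$ then satisfies $(N\cdot C) = -(A\cdot C) < 0$, so $C \subset {\rm Supp}(N)$; in particular the null-locus of $h$ is a proper subvariety. By openness of ampleness, the same decomposition with a slightly shrunk ample part $A_0 \subset A$ works uniformly for every nef $\xi$ in a small compact neighborhood $U$ of $h$ inside the open cone ${\rm Big}(X)$, so null curves of all such $\xi$ are confined to a fixed effective locus.

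For $\epsilon > 0$ sufficiently small and $\Delta$ a sufficiently general effective $\mathbf{Q}$-divisor numerically equivalent to $\epsilon N$, Bertini ensures $(X,\Delta)$ is klt. Since $K_X + \Delta \equiv \epsilon N$, curves $C$ in ${\rm Supp}(N)$ satisfy $((K_X+\Delta)\cdot C) < 0$. The log Cone Theorem applied to $(X,\Delta)$ then gives that the $(K_X+\Delta)$-negative part of $\overline{\rm NE}(X)$ is locally a rational polyhedral cone, so only finitely many extremal rays $R_i$ cut out facets of $\overline{\rm Amp}(X)$ meeting $U$. The log Contraction Theorem produces for each $R_i$ an extremal contraction $\varphi_i : X \to Y_i$; the associated facet of $\overline{\rm Amp}(X)$ is $\varphi_i^{*}\overline{\rm Amp}(Y_i)$, hence rational. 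Equivalently, the Rationality Theorem directly shows that each supporting hyperplane is defined over $\mathbf{Q}$.

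The main obstacle is the uniform perturbation step: one must arrange a single pair $(\epsilon,\Delta)$ so that \emph{every} null curve of \emph{every} nef class in $U$ lies in the $(K_X+\Delta)$-negative locus, not merely the null curves of the single class $h$. This is where compactness of $U$ inside the \emph{open} cone ${\rm Big}(X)$ is essential: it guarantees a uniform ample shift $A_0$, and hence a uniform effective $N$ whose support controls the null loci of all nearby big nef classes simultaneously. This compactness-based uniformity is exactly what makes the conclusion local in ${\rm Big}(X)$ rather than global in $\overline{\rm Amp}(X)$, since near the boundary $\partial\, {\rm Big}(X)$ such a uniform decomposition must break down.
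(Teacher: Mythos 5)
The paper does not prove this statement at all: it is quoted as a black box from Kawamata \cite[Theorem 5.7]{Ka88}, so there is no in-paper argument to compare against. Your proof is in substance the standard (indeed Kawamata's own) argument for that result --- write a big nef class as ample plus effective, perturb the numerically trivial $K_X$ to a klt pair, and run the log cone/rationality theorems --- so the strategy is the right one. Three points need repair. First, the sentence ``curves $C$ in ${\rm Supp}(N)$ satisfy $((K_X+\Delta)\cdot C)<0$'' is false as written: a curve contained in ${\rm Supp}(N)$ can meet $N$ nonnegatively. The fact you actually need (and established two sentences earlier) is that every irreducible curve $C$ with $(\xi\cdot C)=0$ for some nef $\xi\in U$ satisfies $(N\cdot C)=-(A_\xi\cdot C)<0$; the containment in ${\rm Supp}(N)$ plays no role and should be dropped. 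Second, to conclude that $\overline{\rm Amp}\,(X)$ is cut out near $h$ by finitely many rational hyperplanes you must control the whole null face $F_\xi=\{z\in\overline{\rm NE}(X):(\xi\cdot z)=0\}$, not only its irreducible curves: observe that any nonzero $z\in F_\xi$ has $(A_0\cdot z)>0$, hence $((K_X+\epsilon N)\cdot z)=\epsilon(N\cdot z)<0$, so $F_\xi$ misses $\overline{\rm NE}(X)_{K_X+\Delta\ge 0}$ entirely and is spanned by extremal rays; moreover all rays arising from classes $\xi\in U$ lie in the closed subcone $\{(K_X+\Delta)\cdot z\le -\epsilon(A_0\cdot z)\}$, whose slice by a fixed ample hyperplane is compact, so local discreteness of extremal rays yields the finiteness you assert. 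Third, the Bertini/general-member step is unnecessary and slightly suspect ($N$ need not move in its numerical equivalence class); since $X$ is smooth, $(X,\epsilon N)$ is already klt for $\epsilon$ below the log canonical threshold, so take $\Delta=\epsilon N$ itself. With these local repairs the argument is correct and coincides with the proof of the cited source.
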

When $\rho(X) = 2$, the boundary of $\overline{\rm Amp}\, (X)$ consists of two half lines, say, $\ell_1 = {\mathbf R}_{\ge 0}x_1$ and $\ell_2 ={\mathbf R}_{\ge 0}x_2$, and the boundary of $\overline{\rm Mov}\, (X)$ also consists of two half lines. Theorem (\ref{kawamata}) says that if $x_1$ is big, then 
$\ell_1$ is a rational ray, so that we can rechoose $x_1$ to be rational. 

The effective cone ${\rm Eff}\, (X)$ is the set of points $x \in {\rm NS}\, (X)_{\mathbf R}$ such that there are prime divisors $D_i$ ($1 \le i \le m$) and non-negative real numbers $a_i$ such that $x$ is represented by the class of $\sum_{i=1}^{m} a_iD_i$. ${\rm Eff}\, (X)$ is a convex cone but not closed in 
general. $\overline{\rm Amp}^e\, (X) := \overline{\rm Amp}\, (X) \cap {\rm Eff}\, (X)$ is the effective nef cone and $\overline{\rm Mov}^e\, (X) := \overline{\rm Mov}\, (X) \cap {\rm Eff}\, (X)$ is the effective movable cone. 
Note that $r({\rm Aut}\, (X))$ naturally acts on $\overline{\rm Amp}^e\, (X)$ 
and $r({\rm Bir}\, (X))$ naturally acts on $\overline{\rm Mov}^e\, (X)$. We also note that $g \in {\rm Bir}\, (X)$ is in ${\rm Aut}\, (X)$ if and only if there are ample divisor classes $H_1$ and $H_2$ such that $g^*(H_1) = H_2$ (see eg. 
\cite[Lemma 1.5]{Ka97}). 

The next proposition should be well known even for non-experts:

\begin{proposition}\label{finite}
Let $X$ be a Calabi-Yau manifold and $H$ be an ample line bundle on $X$. 
Let $G$ be a subgroup of ${\rm Bir}\, (X)$ such that $g^*H = H$ in ${\rm Pic}\, (X)$ for all $g \in G$. Then $G \subset {\rm Aut}\, (X)$ and $G$ is a finite group. In particular, if $g^* = id$ for all $g \in G$, then $G$ is a finite subgroup of ${\rm Aut}\, (X)$. 
\end{proposition}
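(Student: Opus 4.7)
The plan is to first upgrade $G$ from a group of birational self-maps to one of biregular self-maps, then embed $G$ into an algebraic subgroup of some projective linear group, and finally force this algebraic group to be finite via the vanishing $H^0(X, T_X) = 0$ available for any Calabi-Yau manifold in the wider sense.

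The inclusion $G \subset {\rm Aut}\, (X)$ is immediate from the criterion recalled just above the statement: for each $g \in G$ the equality $g^*H = H$ with $H$ ample provides ample classes $H_1 = H_2 = H$ with $g^*H_1 = H_2$, so $g$ is biregular. Next, fix $m \gg 0$ so that $mH$ is very ample, and consider the resulting closed embedding $\varphi_{|mH|} \colon X \hookrightarrow \BP^N$ with $N = h^0(X, mH) - 1$. Since every $g \in G$ fixes $mH$ in ${\rm Pic}\, (X)$, the induced linear map on $H^0(X, mH)$ is well-defined up to a scalar and hence yields an element of ${\rm PGL}(N+1, \C)$ whose restriction to $\varphi(X)$ is $g$. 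This produces an injective group homomorphism
$$\rho \colon G \hookrightarrow A := \{ \gamma \in {\rm PGL}(N+1, \C) : \gamma(X) = X \},$$
and $A$ is a closed subgroup scheme of ${\rm PGL}(N+1, \C)$, hence an algebraic group of finite type over $\C$.

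It then remains to show $A$ is finite. Being of finite type, it suffices to prove $\dim A = 0$. The Lie algebra of $A$ embeds into $H^0(X, T_X)$ via the differential of the natural action of $A$ on $X$ (injectivity because $A \hookrightarrow {\rm Aut}\, (X)$ acts faithfully), so I would show $H^0(X, T_X) = 0$. Using $\omega_X \simeq \sO_X$ one has $T_X \simeq \Omega_X^{n-1}$, whence Hodge symmetry together with Serre duality give
$$h^0(T_X) \,=\, h^0(\Omega_X^{n-1}) \,=\, h^{n-1}(\sO_X) \,=\, h^1(\omega_X) \,=\, h^1(\sO_X) \,=\, 0,$$
the last equality being the defining condition of a Calabi-Yau manifold in the wider sense. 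Thus $A$ is zero-dimensional and therefore finite, and so is $G$. The last sentence of the proposition is immediate, since $g^* = {\rm id}$ on ${\rm NS}\, (X)$ in particular forces $g^*H = H$ for any chosen ample $H$.

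I do not foresee a serious obstacle. The mildly technical point is that the action of $g^*$ on $H^0(X, mH)$ is only canonical up to a nonzero scalar, but this is harmless after passing to ${\rm PGL}(N+1, \C)$; everything else is a standard application of Serre duality plus the fact that stabilizers of closed subschemes in algebraic groups are closed.
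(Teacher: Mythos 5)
Your proof is correct and follows essentially the same route as the paper: both arguments first use the ampleness of $H$ to conclude $G\subset{\rm Aut}\,(X)$, then embed $X$ into ${\mathbf P}^N$ by $\vert mH\vert$ and realize the relevant group as a Zariski-closed subgroup of ${\rm PGL}$ (the paper phrases the closedness via the stabilizer of $[X]$ in the Hilbert scheme, you via the stabilizer of $X\subset{\mathbf P}^N$, which is the same point), and finally kill its dimension by the identical computation $H^0(X,T_X)\simeq H^0(X,\Omega_X^{n-1})\simeq\overline{H^1(X,{\mathcal O}_X)^*}=0$. No gaps.
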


\begin{proof} Since $g^*H = H$ and $H$ is ample, it follows that $g \in {\rm Aut}\, (X)$. Embed $X \subset {\mathbf P}^N$ by $\vert mH \vert$ with large $m$. Let 
$$\tilde{G} := \{g \in {\rm Aut}\, (X)\, \vert\, g^*H = H\}\,\, .$$
Then $G \subset \tilde{G} \subset {\rm Aut}\, (X)$. By the definition of $\tilde{G}$, we have $\tilde{G} \subset {\rm PGL}\, (N)$ and $\tilde{G}$ is the stabilizer of $[X]$ of the natural action of ${\rm PGL}\, (N)$ 
on the Hilbert scheme ${\rm Hilb}_{{\mathbf P}^N}$ of ${\mathbf P}^N$, where 
$[X]$ is the point corresponding to the embedding above. Since the action 
${\rm PGL}\, (N) \times {\rm Hilb}_{{\mathbf P}^N} \to {\rm Hilb}_{{\mathbf P}^N}$ is algebraic, i.e., continuous in the Zariski topology and the point $[X]$ is Zariski closed in ${\rm Hilb}_{{\mathbf P}^N}$, it follows that $\tilde{G}$ is Zariski closed in ${\rm PGL}\, (N)$. Since ${\rm PGL}\, (N)$ is affine noetherian, so is $\tilde{G}$. Since $K_X = 0$ in ${\rm Pic}\, (X)$, it follows that $T_X \simeq \Omega_X^{n-1}$, where $n = \dim\, X$. Denoting by $\overline{A}$ (resp. $A^*$) the complex conjugate (resp. dual) of a complex vector space $A$, we have the following isomorphisms as ${\mathbf C}$-vector spaces
$$H^0(X, T_X) \simeq H^{0}(X, \Omega_X^{n-1}) \simeq \overline{H^{n-1}(X, {\mathcal O}_X)} \simeq \overline{{H^{1}(X, {\mathcal O}_{X}(K_X)}^*} \simeq  \overline{H^{1}(X, {\mathcal O}_X)^*} = 0\, .$$
Here we used the Hodge symmetry, the Serre duality, 
and the fact that ${\mathcal O}_{X}(K_X) \simeq {\mathcal O}_{X}$ 
and $H^1(X, {\mathcal O}_{X}) = 0$. Hence $\dim\, {\rm Aut}\, (X) = 0$. Thus $\dim\, \tilde{G} = 0$ as well. 
Since we already know that $\tilde{G}$ is affine noetherian, this implies that $\tilde{G}$ is a finite set. Since $G \subset \tilde{G}$, the result follows.
\end{proof}

Here we also recall the following 
very important result due to Burnside (\cite[main theorem]{Bu05}):

\begin{theorem}\label{burnside}
Let $G$ be a subgroup of ${\rm GL}(r, {\mathbf C})$. Assume that there is a positive integer $d$ such that $G$ is of exponent $\le d$, i.e., ${\rm ord}\, g \le d$ for all $g \in G$. Here ${\rm ord}\, g$ is the order of $g$ as an element 
of the group $G$. Then $G$ is a finite group.
\end{theorem}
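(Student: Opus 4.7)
The plan is to follow the classical Burnside argument, reducing to the irreducible case and then using the fact that traces of elements of bounded order take only finitely many values.

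First I would reduce to the case where $G$ acts irreducibly on $V = {\mathbf C}^r$. If $W \subsetneq V$ is a nonzero proper $G$-invariant subspace, then in a basis adapted to $W$ every $g \in G$ is block upper triangular, and the two diagonal blocks give representations $G \to {\rm GL}(W)$ and $G \to {\rm GL}(V/W)$ of exponent $\le d$. By induction on $r$ the images of $G$ in these two smaller general linear groups are finite, so $G$ maps into a finite group with kernel $N$ consisting of block unipotent matrices. But an element of $N$ has all eigenvalues equal to $1$, so its order being $\le d$ together with the Jordan form forces $N = \{1\}$ (in characteristic zero, a unipotent matrix of finite order is the identity). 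Hence $G$ is finite in the reducible case.

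Now assume $G$ acts irreducibly. By the classical Burnside density theorem for irreducible subalgebras of $M_r({\mathbf C})$, the ${\mathbf C}$-linear span of $G$ inside $M_r({\mathbf C})$ is all of $M_r({\mathbf C})$. Hence I can choose elements $g_1, \ldots, g_{r^2} \in G$ that form a ${\mathbf C}$-basis of $M_r({\mathbf C})$. Consider the map
$$\Phi : G \to {\mathbf C}^{r^2}\,\, ,\,\, \Phi(g) := ({\rm tr}(gg_1), {\rm tr}(gg_2), \ldots, {\rm tr}(gg_{r^2}))\,\, .$$
This map is injective: if $\Phi(g) = \Phi(h)$, then ${\rm tr}((g - h) g_i) = 0$ for every $i$, and since the $g_i$ span $M_r({\mathbf C})$ this forces ${\rm tr}((g-h) m) = 0$ for all $m \in M_r({\mathbf C})$, whence $g = h$ by the nondegeneracy of the trace pairing on $M_r({\mathbf C})$.

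Finally I would bound the image of $\Phi$. Every $g \in G$ satisfies $g^{{\rm ord}(g)} = 1$ with ${\rm ord}(g) \le d$, so each eigenvalue of $g$ is a root of unity of order dividing some integer in $\{1, 2, \ldots, d\}$, hence a root of unity of order dividing $N := {\rm lcm}(1, \ldots, d)$. In particular ${\rm tr}(gg_i)$, for any fixed $g_i$, is the trace of an element of bounded order (namely $gg_i$ also lies in $G$ and has order $\le d$), so it is a sum of $r$ such roots of unity and therefore takes only finitely many values. Hence $\Phi(G)$ is finite, and by injectivity so is $G$. The main delicate point is the density step (reducing to an irreducible subalgebra filling out $M_r({\mathbf C})$), but this is the standard Burnside density theorem and everything else is bookkeeping with traces and roots of unity.
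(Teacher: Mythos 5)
Your proof is correct and complete. Note that the paper itself does not prove this statement at all: it is quoted as Theorem (\ref{burnside}) with a citation to Burnside's original 1905 paper, and is used as a black box in Corollary (\ref{finite2}). The argument you give is the standard classical one (essentially Burnside's own): the reduction to the irreducible case via block triangularization, with the kernel killed because a unipotent matrix of finite order in characteristic zero is the identity; then the Burnside density theorem to extract a basis $g_1,\dots,g_{r^2}$ of $M_r({\mathbf C})$ from $G$; injectivity of $g \mapsto ({\rm tr}(gg_i))_i$ from nondegeneracy of the trace pairing; and finiteness of the image because each ${\rm tr}(gg_i)$ is a sum of $r$ roots of unity of order dividing ${\rm lcm}(1,\dots,d)$. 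All steps check out, including the small but necessary observations that the restricted and quotient representations still have exponent $\le d$ and that $gg_i \in G$ so its order is also bounded. The one ingredient you take on faith, the density theorem for irreducible subalgebras of $M_r({\mathbf C})$, is exactly the right thing to quote and is standard.
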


\begin{corollary}\label{finite2}
Let $X$ be a Calabi-Yau manifold. Then: 
\begin{enumerate}
\item 
$[{\rm Bir}\, (X) : {\rm Aut}\, (X)] = [r({\rm Bir}\, (X)) : 
r({\rm Aut}\, (X))]$. 
\item 
Let $G$ be a subgroup of ${\rm Bir}\, (X)$. Then, 
$G$ is finite if and only if there is a positive integer $d$ such that $r(G)$ 
is of exponent $\le d$. 
\end{enumerate}
\end{corollary}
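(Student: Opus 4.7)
The plan is to first establish that $\ker r$ is a finite subgroup of ${\rm Aut}(X)$, and then use this to reduce (1) to a set-theoretic bijection of cosets and (2) to a direct application of Burnside's theorem. Both parts hinge on the same observation about $\ker r$, so I would extract this key lemma first.

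First I would apply Proposition (\ref{finite}) to the subgroup $\ker r \subset {\rm Bir}(X)$: every $g \in \ker r$ satisfies $g^*H = H$ for any ample class $H$, so the proposition gives both $\ker r \subset {\rm Aut}(X)$ and $|\ker r| < \infty$. This is the technical heart of what follows; the main conceptual obstacle in the whole argument is anticipating that $r$ need not be injective on ${\rm Bir}(X)$, and Proposition (\ref{finite}) is exactly what tames its kernel.

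For (1), my plan is to exhibit an explicit bijection
$$\Phi : {\rm Bir}(X)/{\rm Aut}(X) \longrightarrow r({\rm Bir}(X))/r({\rm Aut}(X)), \qquad g\,{\rm Aut}(X) \longmapsto r(g)\, r({\rm Aut}(X)),$$
between sets of left cosets. Surjectivity is immediate from surjectivity of $r$ onto its image. Well-definedness and injectivity both reduce to the equivalence $g_2^{-1}g_1 \in {\rm Aut}(X) \Longleftrightarrow r(g_2^{-1}g_1) \in r({\rm Aut}(X))$, and the nontrivial direction uses the first step: if $r(g_2^{-1}g_1) = r(h)$ with $h \in {\rm Aut}(X)$, then $h^{-1} g_2^{-1} g_1 \in \ker r \subset {\rm Aut}(X)$, whence $g_2^{-1} g_1 \in {\rm Aut}(X)$.

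For (2), the forward direction is trivial since $r(G)$ is a homomorphic image of the finite group $G$. For the converse, assuming $r(G)$ has exponent $\le d$, I would view $r(G) \subset {\rm GL}({\rm NS}(X) \otimes_{\mathbf Z} {\mathbf C}) \cong {\rm GL}(\rho(X), {\mathbf C})$ and apply Theorem (\ref{burnside}) to conclude that $r(G)$ is finite. Combined with the finiteness of $G \cap \ker r \subset \ker r$ from the first step, this bounds $|G|$ by $|\ker r| \cdot |r(G)|$, which completes the proof. No step is technically difficult once the $\ker r$ finiteness is in hand.
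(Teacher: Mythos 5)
Your proposal is correct and follows essentially the same route as the paper: both parts rest on Proposition (\ref{finite}) giving that $\ker r$ is a finite subgroup of ${\rm Aut}(X)$, with (1) reduced to the resulting coset bijection and (2) obtained from Burnside's theorem applied to $r(G)$. You merely spell out the coset bijection that the paper leaves implicit, and your bound $|G| \le |\ker r|\cdot|r(G)|$ via $G \cap \ker r$ is in fact slightly more carefully stated than the paper's equality.
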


\begin{proof} Since ${\rm Ker}\, r \subset {\rm Aut}\, (X)$ by Proposition (\ref{finite}), the assertion (1) follows. Let us show (2). Recall that ${\rm Ker}\, r$ is a finite group by 
Proposition (\ref{finite}). Assume that 
$r(G)$ is of exponent $\le d$. Then $r(G)$ is a finite group by 
Theorem (\ref{burnside}). Hence $\vert G \vert = 
\vert {\rm Ker}\, r \vert \cdot 
\vert r(G) \vert < \infty$ and we are done for {\it if part}. 
{\it Only if part} is clear.
\end{proof}

\section{Proof of Theorem (\ref{main1}), Corollary (\ref{cor1}).}

In this section, we prove Theorem (\ref{main1}) and Corollary (\ref{cor1}).

From now on, $X$ is an $n$-dimensional Calabi-Yau manifold with $\rho(X) = 2$, 
and $r : {\rm Bir}\, (X) \to {\rm GL}\, ({\rm NS}\,(X))$ is the natural representation as in Section 1.
Since $\rho(X) = 2$, the boundary of $\overline{\rm Amp}\, (X)$ consists of two half lines and the boundary of $\overline{\rm Mov}\, (X)$ also consists of two half lines. We denote 
the two boundary rays of $\overline{{\rm Amp}}\, (X)$ by 
$\ell_1 = {\mathbf R}_{\ge 0}x_1$, $\ell_2 = {\mathbf R}_{\ge 0}x_2$, 
and the two boundary rays of $\overline{{\rm Mov}}\, (X)$ by 
$m_1 = {\mathbf R}_{\ge 0}y_1$, $m_2 = {\mathbf R}_{\ge 0}y_2$. When $\ell_i$ 
(resp. $m_i$) is rational, we always choose $x_i$ (resp. $y_i$) so that $x_i$ 
(resp. $y_i$) is the unique primitive 
integral class on $\ell_i$ (resp. $m_i$). 

Theorem (\ref{main1}) follows from Corollary (\ref {finite2}) 
and the following slightly more general: 

\begin{proposition}\label{finite3} 
\begin{enumerate}
\item 
If there is no quadratic form 
$q_X(x)$ on ${\rm NS}(X)_{\mathbf R}$ such that $(x^n)_X = (q_X(x))^{n/2}$ (when $n$ is even), then $r({\rm Aut}(X))$ is of exponent at most $2$. 
\item 
If at least one of $\ell_i$ is rational, then $r({\rm Aut}\, (X))$ is of exponent at most $2$.
\item 
If at least one of $m_i$ is rational, then $r({\rm Bir}\, (X))$ is of exponent at most $2$.
\end{enumerate}
\end{proposition}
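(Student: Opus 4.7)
The plan is to exploit the fact that $\rho(X)=2$ forces the action of any element of ${\rm Bir}(X)$ on the two boundary rays of the relevant cone ($\overline{{\rm Amp}}(X)$ for ${\rm Aut}(X)$, $\overline{{\rm Mov}}(X)$ for ${\rm Bir}(X)$) to either fix each ray or swap the two, so that $(g^*)^2$ always fixes both rays. Consequently $(g^*)^2$ is diagonal in the basis $(x_1, x_2)$ (resp.\ $(y_1, y_2)$) with positive eigenvalues $\mu_1, \mu_2$. Since $(g^*)^2 \in {\rm GL}({\rm NS}(X))$ preserves the integral lattice, its determinant is $\pm 1$, so positivity of the eigenvalues forces $\mu_1 \mu_2 = 1$, and its trace $\mu_1 + \mu_2$ lies in ${\mathbf Z}$. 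In each part it then suffices to show $\mu_1 = 1$.

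For parts (2) and (3): the rational ray has a primitive integral generator (chosen as $x_1$, resp.\ $y_1$, at the start of Section 3), and by primitivity $(g^*)^2(x_1) = \mu_1 x_1$ forces $\mu_1 \in {\mathbf Z}_{>0}$. Combined with $\mu_1 \mu_2 = 1$ and $\mu_2 > 0$, this gives $\mu_1 = \mu_2 = 1$, hence $(g^*)^2 = {\rm id}$. For (3) the only extra input is that $r$ is defined on the whole of ${\rm Bir}(X)$ and that $r({\rm Bir}(X))$ preserves $\overline{{\rm Mov}}(X)$, both recalled in Section 2.

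For part (1): without any rationality hypothesis, I use instead that $(g^*)^2$ preserves the top intersection form $F(v) := (v^n)_X$. Writing $v = a x_1 + b x_2$ and expanding $F(v) = \sum_{k=0}^{n} \binom{n}{k}\, c_k\, a^{n-k} b^k$ with $c_k := (x_1^{n-k} \cdot x_2^{k})_X \in {\mathbf R}$, the identity $F((g^*)^2 v) = F(v)$ together with $\mu_2 = \mu_1^{-1}$ yields $c_k (\mu_1^{n-2k} - 1) = 0$ for every $k$. If $\mu_1 \ne 1$, then $c_k = 0$ for every $k \ne n/2$; when $n$ is odd this kills every $c_k$, contradicting $(h^n)_X > 0$ for any ample class $h$, and when $n$ is even it forces $F(v) = \binom{n}{n/2}\, c_{n/2}\, (ab)^{n/2}$, so the real-valued quadratic form $q_X(v) := ab$ on ${\rm NS}(X)_{\mathbf R}$ (written in the basis $x_1, x_2$) and the constant $c := \binom{n}{n/2}\, c_{n/2}$ realize the forbidden relation $(v^n)_X = c\, (q_X(v))^{n/2}$. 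Hence $\mu_1 = 1$ and $(g^*)^2 = {\rm id}$.

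The main technical point to get right is the initial dichotomy "$g^*$ either fixes each boundary ray or swaps the two", i.e., that $g^*$ genuinely preserves the relevant closed convex cone; this is immediate for ${\rm Aut}(X)$ acting on $\overline{{\rm Amp}}(X)$, and it follows for ${\rm Bir}(X)$ on $\overline{{\rm Mov}}(X)$ from the facts recalled in Section 2 (in particular, that $r$ is well defined on ${\rm Bir}(X)$ thanks to triviality of $K_X$). Everything after that is elementary linear algebra on ${\mathbf Z}^{2}$; the only delicate bookkeeping step is in (1), where one has to identify $k = n/2$ as the unique surviving Fujiki-style term and repackage $ab$ as a quadratic form to contradict the hypothesis.
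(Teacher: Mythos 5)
Your proposal is correct and follows essentially the same route as the paper: square $g^*$ so that it fixes both extremal rays of the strict cone, use $\det(g^*)^2=1$ to get $\mu_1\mu_2=1$, invoke primitivity of an integral ray generator in (2)--(3), and compare coefficients of the invariant top intersection form in (1), with the surviving $k=n/2$ term repackaged as the excluded Fujiki-type relation. The only imprecision is in (2)--(3): from $\mu_1\in{\mathbf Z}_{>0}$, $\mu_1\mu_2=1$ and $\mu_2>0$ alone one cannot conclude $\mu_1=1$ (e.g.\ $\mu_1=2$, $\mu_2=1/2$); you must also use either the integrality of the trace $\mu_1+\mu_1^{-1}$, which you recorded in your setup, or the fact that $(g^*)^{-2}$ is likewise integral so that $\mu_1^{-1}\in{\mathbf Z}_{>0}$ as well.
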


\begin{proof} Let us show (1). Let $g \in {\rm Aut}\, (X)$. Since ${\rm Aut}\, (X)$ acts on $\overline{{\rm Amp}}\, (X)$, which is strictly convex, it follows that 
there are positive real numbers $\alpha > 0$ and $\beta >0$ such that 
$$(g^*)^{2}(x_1) = \alpha x_1\,\, ,\,\, (g^2)^{*}(x_2) = \beta x_2\,\, .$$
Since $g^*$ is defined over ${\mathbf Z}$, it follows that 
${\rm det}\, g^* = \pm 1$ and therefore ${\rm det}\, (g^*)^2 = 1$. Again, since $\overline{{\rm Amp}}\, (X)$ is strictly convex and $\rho(X) =2$, it follows 
that 
$x_1$ and $x_2$ form real basis of ${\rm NS}(X)_{\mathbf R}$. Thus, 
$$\alpha\beta = {\rm det}\, (g^*)^2 = 1\,\, .$$
Let $t,s \in {\mathbf R}$ and consider the element $tx_1 + sx_2$ in ${\rm NS}(X)_{\mathbf R}$. 
Since $g \in {\rm Aut}\, (X)$, we have 
$$(((g^2)^{*}(tx_1 + sx_2))^n)_X = ((tx_1 + sx_2)^n)_X\,\, .$$
Substituting $(g^2)^{*}(tx_1 + sx_2) = \alpha t x_1 + \beta s x_2$ and expanding both sides, we obtain 
$$\sum_{k=0}^{n} C_k \alpha^k \beta^{n-k} (x_1^kx_2^{n-k})_Xt^ks^{n-k} = \sum_{k=0}^{n} C_k (x_1^kx_2^{n-k})_Xt^ks^{n-k}\,\, ,$$
where $C_k > 0$ are binomial coefficients. Since this equality holds for all 
$t, s \in {\mathbf R}$, this is the equality of polynomials of $t$ and $s$. 
Hence 
$$\alpha^k \beta^{n-k} (x_1^kx_2^{n-k})_X = (x_1^kx_2^{n-k})_X\,\, 
{\rm --- (*)}$$ 
for all integers $k$ such that $0 \le k \le n$. 
On the other hand, since $X$ is projective, there are real numbers $s_0$, $t_0$ such that $t_0x_1 + s_0x_2$ is an ample divisor class. Hence by 
$$0 < ((t_0x_1 + s_0x_2)^n)_X = \sum_{k=0}^{n} C_k (x_1^kx_2^{n-k})_Xt_0^ks_0^{n-k}\,\, ,$$
it follows that $I \not= \emptyset$. Here $I$ is the set of integers $k$ such that $0 \le k \le n$ 
and 
$$(x_1^kx_2^{n-k})_X \not= 0\,\, .$$
The set $I$ is independent of the choice of $g$. If $n$ is odd, then $n/2 \not\in I$, and therefore $I \setminus \{n/2\} \not= \emptyset$. 
If $n$ is even and $I = \{n/2\}$, then setting $k = n/2$, we have 
$$((tx_1 + sx_2)^n)_X = C_k(x_1^kx_2^{k})_X(ts)^k = C(ts)^k\,\, .$$
This means that real number $C := C_k(x_1^kx_2^{k})_X$ and a quadratic form 
$$q_X(tx_1 + sx_2) := ts$$ satisfies $(x^n)_X = C(q_X(x))^k$. However, such cases are excluded by the assumption. Therefore under our assumption, 
$$I \setminus \{n/2\} \not= \emptyset\,\, .$$
Thus, there is an integer $k \in I$ such that $k \not= n/2$. For this $k$, we obtain 
$$\alpha^k \beta^{n-k} = 1\,\, $$
from (*). 
On the other hand, $\alpha^{n-k}\beta^{n-k} = 1$ by $\alpha\beta = 1$. Hence
$$\alpha^{2k-n} = 1\,\, .$$
Since $2k - n \not= 0$ by $k \not= n/2$, and $\alpha$ is a positive real number, it follows that 
$\alpha = 1$. Then $\beta = 1$ by $\alpha\beta = 1$. Hence 
$(g^*)^2(x_1) = x_1$ and $(g^*)^2(x_2) = x_2$. Since $x_1$ and $x_2$ form basis of ${\rm NS}(X)_{\mathbf R}$, it follows that $(g^*)^2 = id$. Therefore $r({\rm Aut}\, (X))$ is of exponent $\le 2$. 

Let us show (2). Under the same notation as in the proof of (1), if $x_1$ is primitive and integral, then $(g^*)^{2}(x_1) = x_1$, i.e., $\alpha = 1$. Hence $\beta = 1$. Therefore $(g^*)^2 = id$ for the same 
reason as in the proof of (1). This proves (2). 

Let us show (3). Let $g \in {\rm Bir}\, (X)$. Recall that $\overline{{\rm Mov}}\, (X)$ is strict and $y_1$ and $y_2$ form then basis of ${\rm NS}(X)_{\mathbf R}$. Since ${\rm Bir}\, (X)$ acts on $\overline{{\rm Mov}}\, (X)$, 
it follows that 
there are positive real numbers $\alpha > 0$ and $\beta >0$ such that 
$$(g^*)^{2}(y_1) = \alpha y_1\,\, ,\,\, (g^2)^{*}(y_2) = \alpha y_2\,\, .$$
Since $g^*$ is defined over ${\mathbf Z}$, it follows that 
${\rm det}\, g^* = \pm 1$ and therefore ${\rm det}\, (g^*)^2 = 1$. Thus 
$\alpha \beta = 1$. 
If $y_1$ is primitive and integral, then $(g^*)^{2}(y_1) = y_1$, i.e., $\alpha = 1$. Hence $\beta = 1$. Therefore $(g^*)^2 = id$ for the same 
reason as in (1). This proves (3). 
\end{proof} 

Let us show Corollary (\ref{cor1}). By the assumption, we have a finite rational polyhedral fundamental domain $\Delta$ of the action $({\rm Aut}\, (X))^*$ on $\overline{{\rm Amp}}^e\, (X)$. Since $g^*$ ($g \in {\rm Aut}\, (X)$) are linear and defined over ${\mathbf Z}$, it follows that $g^*\Delta$ are all finite rational polyhedral cones. Since ${\rm Aut}\, (X)$ 
is a finite group by Theorem (\ref{main1}), it follows that the cone
$$\overline{{\rm Amp}}^e\, (X) = \cup_{g \in {\rm Aut}\, (X)} g^*\Delta$$
is also a rational polyhedral cone. Hence, so is its closure 
$\overline{{\rm Amp}}\, (X)$. This means that both boundary rays of $\overline{{\rm Amp}}\, (X)$ are rational. This completes the proof of Corollary (\ref{cor1})(1). 
Let us show (2). It is shown by \cite[Theorem 5.1]{Og93} that a Calabi-Yau threefold $X$ in the strict sense contains a rational curve if there is a non-zero non-ample nef line bundle $L$ such that $(L.c_2(X)) \not= 0$. Since $X$ is a Calabi-Yau threefold in the strict sense, it follows that the linear form $(*, c_2(X))$ is not identically $0$ on ${\rm NS}(X)$ (\cite[Corollary 4.5]{Ko87}). Hence $(*, c_2(X)) \not= 0$ on at least of one boundary ray of $\overline{{\rm Amp}}\, (X)$. Since both boundary rays are rational by (1), the result follows.

\section{Proof of Theorem (\ref{abc}) (1), (2).}

We freely use basic facts on hyperk\"ahler manifolds 
explained in an excellent account by Huybrechts \cite[Part III]{GHJ03}. 

Let $X$ be a projective hyperk\"ahler manifold with $\dim\, X = 2m$. For instance, the Hilbert scheme ${\rm Hilb}^m\, S$ of $m$ points on a projective K3 surface $S$, the generalized Kummer manifold $K_{m}(A)$ of 
an abelian surface $A$, 
and projective manifolds deformation equivalent to them are projective hyperk\"ahler manifold of dimension $2m$ (\cite[21.2]{GHJ03}). We denote by $q_X(x)$ ($x \in H^2(X, {\mathbf Z})$) the Beauville-Bogomolov form (\cite[23.3]{GHJ03}) and by $C = C_X > 0$ the Fujiki constant (\cite[23.4]{GHJ03}). Then 
$(x^{2m})_X = Cq_X(x)^m$ (\cite[23.4]{GHJ03}). The form $q_X(x)$ is of signature $(3,b_2(X)-3)$ on $H^2(X, {\mathbf Z})$ and $q_X(x) \vert {\rm NS}\, (X)$ is of signature $(1, \rho(X) -1)$ (\cite[23.3, 26.4]{GHJ03}). Moreover, $q_X(h) \ge 0$ for $h \in \overline{{\rm Amp}}\, (X)$ and $q_X(g^*x) = q_X(x)$ for any $g \in {\rm Bir}\, (X)$ and any $x \in {\rm NS}\, (X)_{{\mathbf R}}$ (\cite[26.4, 27.1]{GHJ03}). We denote by $P(X)$ the positive cone of $X$, i.e., the connected component of 
$$\{x \in {\rm NS}\,(X)_{\mathbf R} \vert q_X(x) > 0\}$$
containing the ample classes. $\overline{P(X)}$ is the closure of $P(X)$. 

The following proposition should be well known:
\begin{proposition}\label{hk1}
\begin{enumerate}
\item 
$\overline{{\rm Mov}}\, (X) \subset \overline{P(X)} \subset 
\overline{{\rm Big}}\, (X)$.
\item
$P(X) \subset {\rm Big}\, (X)$.
\end{enumerate}
\end{proposition}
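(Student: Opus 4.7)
The plan is to first extract a polarized Fujiki relation and then to deploy it twice: once to compare $\overline{{\rm Mov}}(X)$ with $\overline{P(X)}$, and once to compare $\overline{P(X)}$ with $\overline{{\rm Big}}(X)$. Differentiating the Fujiki relation $(x^{2m})_X = C\,q_X(x)^m$ once in the direction of $y$ gives, for all $x,y\in{\rm NS}(X)_{\mathbf R}$,
\[
(x^{2m-1}\cdot y)_X \;=\; C\,q_X(x)^{m-1}\,q_X(x,y),
\]
where $q_X(x,y)$ denotes the polarization of the quadratic form $q_X$. This identity is the main computational bridge between intersection theory and Beauville--Bogomolov positivity.

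For the first inclusion of (1), I would take a movable class $D$ and any ample class $h$. Since $|kD|$ has no fixed component for some $k\ge 1$, intersecting a general member with $h^{2m-1}$ yields $(D\cdot h^{2m-1})_X \ge 0$. Substituting $x=h$ and $y=D$ in the polarized relation above and using $q_X(h)>0$ forces $q_X(D,h)\ge 0$ for every ample $h$. Because $q_X|_{{\rm NS}(X)_{\mathbf R}}$ has Lorentzian signature $(1,\rho(X)-1)$, the closed positive cone $\overline{P(X)}$ is self-dual with respect to $q_X$ (a standard Minkowski-geometry fact), so $D\in\overline{P(X)}$. Taking closures yields $\overline{{\rm Mov}}(X)\subset\overline{P(X)}$.

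For statement (2), and hence by closure for the second inclusion of (1), observe that both $P(X)$ and ${\rm Big}(X)$ are open in ${\rm NS}(X)_{\mathbf R}$, so by density it suffices to prove $P(X)\cap{\rm NS}(X)_{\mathbf Q}\subset{\rm Big}(X)$. After clearing denominators I may take $x$ integral, and Riemann--Roch on the Calabi--Yau manifold $X$ (using $K_X=0$) reads
\[
\chi(X,{\mathcal O}_X(nx)) \;=\; \frac{C\,q_X(x)^m}{(2m)!}\,n^{2m}+O(n^{2m-1}),
\]
which is positive and of maximal degree in $n$. Serre duality identifies $h^{2m}(nx)$ with $h^0(-nx)$; but $-nx$ is not effective, since otherwise pairing an effective representative with $h^{2m-1}$ for some ample $h$ and applying the polarized Fujiki identity would give $0\le(-nx\cdot h^{2m-1})_X = -nC\,q_X(h)^{m-1}q_X(x,h)<0$, contradiction, using the reverse Cauchy--Schwarz inequality $q_X(x,h)>0$ valid for two elements of the same positive-cone component. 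Thus $h^{2m}(nx)=0$, and to upgrade the Euler-characteristic estimate into a genuine lower bound on $h^0(nx)$ I would invoke Huybrechts' bigness criterion from \cite{Hu99}. The principal obstacle is precisely this last step: controlling the intermediate cohomologies $h^i(nx)$ for $0<i<2m$ is not accessible by formal arguments and requires the deformation-theoretic input from twistor families specific to hyperk\"ahler geometry, which is what makes the conclusion $P(X)\subset{\rm Big}(X)$ genuine and not merely a consequence of positive top self-intersection.
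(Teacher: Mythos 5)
There is a genuine gap in your proof of the first inclusion of (1). From $(D\cdot h^{2m-1})_X\ge 0$ and the polarized Fujiki relation you deduce $q_X(D,h)\ge 0$ for all ample $h$, and you then invoke self-duality of $\overline{P(X)}$. But self-duality says $D\in\overline{P(X)}$ if and only if $q_X(D,x)\ge 0$ for all $x\in\overline{P(X)}$; you have only verified the pairing condition on the (in general strictly smaller) ample cone. Since dualizing reverses inclusions, ${\rm Amp}\,(X)\subset P(X)$ gives $\overline{P(X)}=P(X)^{\vee}\subset{\rm Amp}\,(X)^{\vee}$, and the containment is strict whenever $\overline{{\rm Amp}}\,(X)\neq\overline{P(X)}$. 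Concretely, your argument uses movability of $D$ only to get $(D\cdot h^{2m-1})_X\ge 0$, which holds for \emph{any} effective $D$; so as written it would prove that every effective class lies in $\overline{P(X)}$. That is false: a $(-2)$-curve class on a K3 surface pairs positively with every ample class yet has $q_X<0$, and the Hilbert--Chow exceptional divisor on ${\rm Hilb}^m S$ (used in Corollary 5.2 of this paper) is effective with $q_X(E)<0$. The paper's proof uses movability in an essential, non-numerical way: for movable $x$ choose two general members $D_1,D_2\in|kx|$; absence of a fixed component forces $S=D_1\cap D_2$ to have pure codimension $2$, and the integral formula for the Beauville--Bogomolov form then gives, up to a positive constant, $q_X(x)=\int_X x^2(\sigma_X\overline{\sigma}_X)^{m-1}=\int_S(\sigma_X\overline{\sigma}_X)^{m-1}\ge 0$ directly. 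This is the idea your argument is missing, and it cannot be replaced by a duality statement against ample classes alone.

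Your treatment of (2) is not wrong but it does not close either: the Riemann--Roch expansion and the vanishing of $h^{2m}(nx)$ are fine, but as you yourself concede, the intermediate cohomology cannot be controlled formally, and at that point you simply cite Huybrechts. The paper also ultimately rests on Huybrechts, but it records the actual mechanism: by the projectivity criterion, a general deformation of $X$ keeping a primitive class $h\in P(X)$ of type $(1,1)$ is projective with $h$ the primitive ample generator of its Picard group, and upper semicontinuity of $h^0$ then yields bigness of $h$ on $X$ itself. If you intend to cite rather than prove this step, you should cite that deformation-plus-semicontinuity argument (or \cite[Proof of Theorem 4.6]{Hu99}) explicitly rather than an unspecified ``bigness criterion.'' Finally, note that even after fixing the first inclusion one should say a word about why $q_X\ge 0$ on a connected convex cone containing the ample classes places it in the closure of the \emph{correct} component of $\{q_X>0\}$; this is immediate but worth stating.
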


\begin{proof} The fundamental inclusion $P(X) \subset {\rm Big}\, (X)$ is the projectivity criterion due to Huybrechts (\cite[23.3, 26.4]{GHJ03}). In fact, once we accept the projective criterion (\cite[Erratum, Theorem 2]{Hu99}, \cite[26.4]{GHJ03}), for a given primitive divisor class $h \in P(X)$, the general deformation of $X$ keeping $h$ being integral $(1,1)$ class, is a projective hyperk\"ahler manifold with the class $h$ being the primitive ample generator of 
the Picard group, therefore the upper-semicontinuity theorem implies the bigness of $h$ on $X$ (\cite[Proof of Theorem 4.6]{Hu99}). The inclusions 
$P(X) \subset 
{\rm Big}\,(X)$, $\overline{P(X)} \subset \overline{{\rm Big}}\, (X)$ follow from this. If $x$ is a movable divisor class, then there are $k >0$, $D_1, D_2 \in \vert kx \vert$ such that $S := D_1 \cap D_2$ is a subscheme of pure codimension $2$ (unless it is empty). By the definition of $q_X(x)$, up to positive multiples, we have  
$$q_X(x) = \int_X x^2(\sigma_X\overline{\sigma}_X)^{2m-2} = \int_S (\sigma_X\overline{\sigma}_X)^{2m-2} \ge 0\,\, .$$
This implies $\overline{{\rm Mov}}\, (X) \subset \overline{P(X)}$. 
\end{proof}

We will use the following slightly weaker version of a very important result due to Markman (\cite[Theorem 6.25]{Ma11}; for the closedness of $\Delta$ in 
${\rm NS}\, (X)_{\mathbf R}$, see also proof there or its source \cite[Page 511]{St85}):

\begin{theorem}\label{hk2} 
There is a finite rational polyhedral cone 
$\Delta \subset \overline{{\rm Mov}}\, (X)$ 
such that
$$\overline{{\rm Mov}}\, (X) = \overline{\cup_{g \in {\rm Bir}\,(X)} g^*\Delta}\,\, ,\,\, (g^*\Delta)^o \cap (\Delta)^o = \emptyset$$
unless $g^* = id$ on ${\rm NS}\, (X)$. Here $A^o$ is the interior of $A$.
\end{theorem}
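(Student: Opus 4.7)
The plan is to reduce the theorem to a structural statement about an arithmetic group action on a Lorentzian cone, exactly as in Sterk's theorem for K3 surfaces. The Beauville-Bogomolov form restricts to a bilinear form of signature $(1,\rho(X)-1)$ on ${\rm NS}\,(X)$, so $P(X)$ is an honest Lorentzian cone and $r({\rm Bir}\,(X))$ lands in ${\rm O}({\rm NS}\,(X))$. The first step is to identify $r({\rm Bir}\,(X))$ as an arithmetic subgroup of this orthogonal group. Via the global Torelli theorem of Verbitsky and Huybrechts, every birational automorphism induces a Hodge isometry on $H^2(X,\mathbf{Z})$ lying in the monodromy group $\Mon^2(X)$ that preserves the positive cone and sends some K\"ahler class of a birational model to a K\"ahler class; restricting to ${\rm NS}\,(X)$ realizes $r({\rm Bir}\,(X))$ as a finite-index subgroup of the Hodge-monodromy stabilizer of ${\rm NS}\,(X)$, which is arithmetic.

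The second step is to describe $\overline{{\rm Mov}}\,(X)$ intrinsically as a chamber in $\overline{P(X)}$. Following Markman, a class lies in the interior of $\overline{{\rm Mov}}\,(X)$ if and only if it pairs strictly positively with every stably prime exceptional class $e \in {\rm NS}\,(X)$ (integral classes of specific negative Beauville-Bogomolov square determined by the deformation type). Inside $\overline{{\rm Mov}}\,(X)$ one then has a finer chamber decomposition into the nef cones $f^* \overline{{\rm Amp}}\,(X')$ of birational minimal models $f : X \dashrightarrow X'$; since any two such models differ by a composition of Mukai flops followed by an automorphism, ${\rm Bir}\,(X)$ permutes these chambers. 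The candidate for $\Delta$ is the nef cone $\overline{{\rm Amp}}\,(X)$ itself, intersected if necessary with an appropriate effective subcone.

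The heart of the matter, and the main obstacle, is showing that this candidate fundamental region can be taken finite rational polyhedral. Here one invokes a theorem of Looijenga generalizing Sterk's theorem: for an arithmetic group $\Gamma \subset {\rm O}(L)$ with $L$ of signature $(1,n)$, acting on a $\Gamma$-invariant convex cone $C$ sandwiched between the positive cone and the rational convex hull of non-negative-square classes, $\Gamma$ admits a finite rational polyhedral fundamental domain on $C$. To verify the hypothesis one uses Proposition (\ref{hk1}) together with Kawamata's local rational polyhedrality (Theorem (\ref{kawamata})), which makes the walls of nef cones of birational models rational wherever they lie in the big cone. The delicate point is the part of $\overline{{\rm Mov}}\,(X)$ that approaches the isotropic boundary of $\overline{P(X)}$, where irrational walls could in principle accumulate; Markman rules this out by showing that the stabilizer in ${\rm Bir}\,(X)$ of any irrational isotropic boundary ray is of parabolic type, which is precisely Looijenga's missing hypothesis. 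Finally, the disjointness $(g^*\Delta)^o \cap (\Delta)^o = \emptyset$ for $g^* \neq {\rm id}$ is immediate from the fundamental-domain property combined with Proposition (\ref{finite}): an interior ample class fixed by $g$ forces $g$ to be an honest automorphism acting trivially on ${\rm NS}\,(X)$.
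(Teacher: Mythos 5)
The first thing to say is that the paper does not prove this statement at all: Theorem (\ref{hk2}) is imported verbatim (in weakened form) from Markman \cite[Theorem 6.25]{Ma11}, with Sterk \cite[Page 511]{St85} cited for the closedness of $\Delta$. So there is no internal proof to compare against; your sketch has to be measured against Markman's argument. At the level of architecture you have reconstructed it correctly: the global Torelli theorem identifies $r({\rm Bir}\,(X))$ with a finite-index subgroup of the ${\rm NS}$-stabilizing monodromy group, hence an arithmetic subgroup of ${\rm O}({\rm NS}\,(X))$ of signature $(1,\rho-1)$; the movable cone is the chamber of the positive cone cut out by the (finitely many monodromy orbits of) stably prime exceptional classes; and a Sterk--Looijenga fundamental-domain theorem for arithmetic groups acting on such Lorentzian cones then produces $\Delta$. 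That is genuinely the content of Markman's proof, and your sketch names the right inputs even if it defers the hard verifications to the sources.

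There is, however, one concrete misstep: your candidate $\Delta = \overline{{\rm Amp}}\,(X)$ (``intersected with an effective subcone'') cannot be the fundamental domain. Three things go wrong. First, the translates $g^*\overline{{\rm Amp}}\,(X)$, $g \in {\rm Bir}\,(X)$, only sweep out the nef cones of birational models \emph{isomorphic} to $X$; if $X$ admits a minimal model $X'$ not isomorphic to $X$, the chamber $f^*\overline{{\rm Amp}}\,(X')$ is missed and the union is not dense in $\overline{{\rm Mov}}\,(X)$. Second, any two automorphisms with distinct action on ${\rm NS}\,(X)$ give the \emph{same} translate $\overline{{\rm Amp}}\,(X)$, violating the disjoint-interiors condition. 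Third, $\overline{{\rm Amp}}\,(X)$ need not be finite rational polyhedral: Proposition (\ref{ex2}) of this very paper exhibits a hyperk\"ahler fourfold of Picard number $2$ with both boundary rays of the nef cone irrational and ${\rm Aut}\,(X)$ infinite. The correct $\Delta$ is the Dirichlet-type domain produced by the Sterk--Looijenga theorem you invoke in the next sentence (essentially $\{x : (x, h_0) \le (x, g^*h_0) \ \text{for all}\ g\}$ for a rational $h_0$ in the interior of ${\rm Mov}\,(X)$), not the nef cone; once you apply that theorem to $\Gamma = r({\rm Bir}\,(X))$ acting on $\overline{{\rm Mov}}\,(X)$ the nef-cone candidate should simply be discarded, and the disjointness clause ``unless $g^* = id$'' is then part of the fundamental-domain conclusion rather than something deduced from Proposition (\ref{finite}).
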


From now on, $X$ is a projective hyperk\"ahler manifold with $\rho(X) = 2$. 
As in Section 3, We denote 
the two boundary rays of $\overline{{\rm Amp}}\, (X)$ by 
$\ell_1 = {\mathbf R}_{\ge 0}x_1$, $\ell_2 = {\mathbf R}_{\ge 0}x_2$, 
and the two boundary rays of $\overline{{\rm Mov}}\, (X)$ by 
$m_1 = {\mathbf R}_{\ge 0}y_1$, $m_2 = {\mathbf R}_{\ge 0}y_2$. 

\begin{lemma}\label{hk3}
Assume that $m_1 = {\mathbf R}_{\ge 0}y_1$ is rational. 
Then $m_2 = {\mathbf R}_{\ge 0}y_2$ is also rational 
and ${\rm Bir}\, (X)$ is a finite group.
\end{lemma}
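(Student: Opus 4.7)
The plan is to first derive finiteness of ${\rm Bir}\,(X)$ from the rationality of $m_1$, and then feed this finiteness into Markman's polyhedral decomposition to force $m_2$ to be rational as well.

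First I would invoke Proposition \ref{finite3}(3): the hypothesis that $m_1$ is rational is exactly what is needed there, and it yields that $r({\rm Bir}\,(X))$ has exponent at most $2$. Since $r({\rm Bir}\,(X))$ sits inside ${\rm GL}({\rm NS}\,(X)_{\mathbf R}) \subset {\rm GL}(2,{\mathbf C})$, Burnside's Theorem \ref{burnside} then gives that $r({\rm Bir}\,(X))$ is a finite group. Combining this with Corollary \ref{finite2}(2) (or equivalently, with Proposition \ref{finite}, which shows $\ker r$ is finite), one concludes that ${\rm Bir}\,(X)$ itself is finite. This already settles the second assertion of the lemma.

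Next I would promote the finiteness of ${\rm Bir}\,(X)$ to rationality of $m_2$ using Markman's Theorem \ref{hk2}. That theorem produces a finite rational polyhedral cone $\Delta \subset \overline{{\rm Mov}}\,(X)$ with
\[
\overline{{\rm Mov}}\,(X) \;=\; \overline{\bigcup_{g \in {\rm Bir}\,(X)} g^{*}\Delta}.
\]
Because ${\rm Bir}\,(X)$ is finite and each $g^{*}$ is a ${\mathbf Z}$-linear automorphism of ${\rm NS}\,(X)$, the union on the right is a finite union of rational polyhedral cones, hence is itself closed and rational polyhedral. Therefore $\overline{{\rm Mov}}\,(X)$ is a rational polyhedral cone in ${\rm NS}\,(X)_{\mathbf R}$, and in particular both of its boundary rays $m_1$ and $m_2$ are rational.

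There is no serious obstacle here: both principal inputs (Proposition \ref{finite3}(3) and Theorem \ref{hk2}) have already been established, and the only point that requires a small check is that $g^{*}\Delta$ is rational polyhedral for each $g \in {\rm Bir}\,(X)$, which is immediate from the fact that $g^{*} \in {\rm GL}({\rm NS}\,(X))$ is defined over ${\mathbf Z}$. The main conceptual point worth emphasising in the writeup is the logical order: rationality of a single ray $m_1$ forces, via Burnside, finiteness of ${\rm Bir}\,(X)$, and only then does Markman's decomposition deliver rationality of the opposite ray $m_2$.
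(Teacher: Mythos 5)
Your proposal is correct and follows essentially the same route as the paper: Proposition (\ref{finite3})(3) plus Burnside/Corollary (\ref{finite2}) gives finiteness of ${\rm Bir}\,(X)$, and then Markman's Theorem (\ref{hk2}) shows the finite union $\cup_{g}g^{*}\Delta$ is closed and rational polyhedral, forcing $m_2$ to be rational. You merely spell out the Burnside step that the paper leaves implicit.
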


\begin{proof} By Proposition (\ref{finite3})(3), 
${\rm Bir}\, (X)$ is a finite group. Then $\cup_{g \in {\rm Bir}\,(X)} g^*\Delta$ is closed. Hence by Theorem (\ref{hk2}), 
$$\overline{\rm Mov}\, (X) = \overline{\cup_{g \in {\rm Bir}\,(X)} g^* \Delta} 
= \cup_{g \in {\rm Bir}\,(X)} g^* \Delta\,\, ,$$ 
is a rational polyhedral domain. Thus $m_2 = {\mathbf R}_{\ge 0}x_2$ is 
rational.
\end{proof}

\begin{lemma}\label{hk4}
Assume that $m_1 = {\mathbf R}_{\ge 0}y_1$ is irrational. 
Then $m_2 = {\mathbf R}_{\ge 0}y_2$ is also irrational 
and ${\rm Bir}\, (X)$ is an infinite group.
\end{lemma}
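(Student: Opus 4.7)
The plan is to derive both assertions as contrapositives of Lemma \ref{hk3}, using the symmetry between the two boundary rays built into Proposition \ref{finite3}(3) and the polyhedrality output of Markman's theorem (Theorem \ref{hk2}).

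For the irrationality of $m_2$, I would argue by contradiction. Suppose $m_2 = \mathbf{R}_{\ge 0}y_2$ is rational. Proposition \ref{finite3}(3) only requires that \emph{at least one} of $m_1, m_2$ be rational, so it already yields that $r({\rm Bir}(X))$ is of exponent at most $2$; Theorem \ref{burnside} together with Corollary \ref{finite2} then forces ${\rm Bir}(X)$ to be finite. In this situation the orbit $\bigcup_{g \in {\rm Bir}(X)} g^*\Delta$ from Theorem \ref{hk2} is a \emph{finite} union of rational polyhedral cones, hence closed, so
$$\overline{{\rm Mov}}(X) = \bigcup_{g \in {\rm Bir}(X)} g^*\Delta$$
is itself a rational polyhedral cone. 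Its two boundary rays are then both rational, contradicting the irrationality of $m_1$. So $m_2$ must be irrational.

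For the infiniteness of ${\rm Bir}(X)$, the same idea works directly: if ${\rm Bir}(X)$ were finite, then Theorem \ref{hk2} would again exhibit $\overline{{\rm Mov}}(X)$ as a finite union of rational polyhedral cones, hence as a rational polyhedral cone, forcing $m_1$ to be rational -- a contradiction. I expect no real difficulty here; both halves of the lemma really come down to the single observation that for $\rho(X) = 2$ the conditions ``${\rm Bir}(X)$ finite'', ``$m_1$ rational'' and ``$m_2$ rational'' are mutually equivalent, by Proposition \ref{finite3}(3) and Theorem \ref{hk2}. The only point deserving verification is the closedness of a finite union of rational polyhedral cones, which is immediate.
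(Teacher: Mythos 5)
Your proposal is correct and follows essentially the same route as the paper: the paper deduces the irrationality of $m_2$ by citing Lemma \ref{hk3} (whose proof is exactly the argument you spell out, with the roles of the two rays swapped), and proves the infiniteness of ${\rm Bir}\,(X)$ by the identical observation that a finite group would make $\bigcup_{g} g^*\Delta$ closed and hence exhibit $\overline{{\rm Mov}}\,(X)$ as a rational polyhedral cone, contradicting the irrationality of $m_1$.
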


\begin{proof}
By Lemma (\ref{hk3}), $m_2 = {\mathbf R}_{\ge 0}y_2$ is irrational. 
If ${\rm Bir}\, (X)$ would be finite, then $\cup_{g \in {\rm Bir}\,(X)} g^*\Delta$ is closed, and by Theorem (\ref{hk2}), 
$$\overline{\rm Mov}\, (X) = \overline{\cup_{g \in {\rm Bir}\,(X)} g^* \Delta} 
= \cup_{g \in {\rm Bir}\,(X)} g^* \Delta\,\, ,$$
a contradiction, because any boundary rays of the right hand side is rational.
Hence ${\rm Bir}\, (X)$ must be an infinite group. 
\end{proof}

\begin{lemma}\label{hk5}
Assume that $\ell_1 = {\mathbf R}_{\ge 0}x_1$ is rational. 
Then $\ell_2 = {\mathbf R}_{\ge 0}x_2$ is also rational 
and ${\rm Aut}\, (X)$ is a finite group.
\end{lemma}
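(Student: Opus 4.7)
The plan is twofold: first deduce finiteness of ${\rm Aut}(X)$ via the preparatory results of Section~2, then prove rationality of $\ell_2$ by splitting on whether $x_2$ is isotropic with respect to $q_X$. Finiteness is immediate: since $\ell_1$ is rational, $x_1$ is the primitive integral generator, Proposition (\ref{finite3})(2) gives that $r({\rm Aut}(X))$ has exponent at most $2$, and Corollary (\ref{finite2})(2) then yields $|{\rm Aut}(X)|<\infty$.

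For rationality of $\ell_2$, note that $\overline{\rm Amp}(X)\subset \overline{P(X)}$ by Proposition (\ref{hk1}), so $q_X(x_2)\geq 0$. If $q_X(x_2)>0$, then Proposition (\ref{hk1}) puts $x_2\in P(X)\subset {\rm Big}(X)$, so $x_2$ is nef and big, and Kawamata's Theorem (\ref{kawamata}) says $\overline{\rm Amp}(X)\cap {\rm Big}(X)$ is locally rational polyhedral near $x_2$; hence $\ell_2$ is rational.

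If instead $q_X(x_2)=0$, then $x_2\in \partial \overline{P(X)}$, and since $\overline{\rm Mov}(X)\subset \overline{P(X)}$ is a full-dimensional convex subcone, the ray $\ell_2$ must be a boundary ray of $\overline{\rm Mov}(X)$; so $\ell_2=m_j$ for some $j\in\{1,2\}$. By Lemmas (\ref{hk3}) and (\ref{hk4}), either both $m_1,m_2$ are rational (in which case we are done) or both are irrational; only the latter alternative (in which ${\rm Bir}(X)$ is infinite) remains to be excluded. I would do so by appealing to Markman's Theorem (\ref{hk2}): its tiling $\overline{\rm Mov}(X)=\overline{\bigcup_{g\in {\rm Bir}(X)}g^*\Delta}$ by rational polyhedral chambers refines the Mori chamber decomposition, with $\overline{\rm Amp}(X)$ a single Mori chamber whose stabilizer in ${\rm Bir}(X)^*$ is ${\rm Aut}(X)^*$. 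Choosing $\Delta\subset \overline{\rm Amp}(X)$, the ample cone is the finite union $\bigcup_{g\in {\rm Aut}(X)}g^*\Delta$ of rational polyhedral cones, hence rational polyhedral, so both $\ell_1,\ell_2$ must be rational; this contradicts the irrationality of $\ell_2=m_j$ and closes the argument.

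The main obstacle is the identification of $\overline{\rm Amp}(X)$ with an ${\rm Aut}(X)^*$-orbit of a Markman chamber: this uses the Mori chamber interpretation underlying Theorem (\ref{hk2}) beyond the abstract fundamental-domain statement. A more self-contained alternative is to exhibit a pseudo-automorphism $g\in {\rm Bir}(X)\setminus {\rm Aut}(X)$ across the interior wall $\ell_1$ (which must exist since $\ell_1\ne m_{j'}$ is an interior ray of $\overline{\rm Mov}(X)$), and then check case by case---using $\det g^*=\pm 1$, $g^*x_1=x_1$ forced by primitivity of $x_1$, and the $q_X$-invariance of $g^*$---that $g^*$ either preserves each $m_i$ (forcing $g\in {\rm Aut}(X)$) or swaps them (forcing $|{\rm Bir}(X)|=2|{\rm Aut}(X)|<\infty$ via Corollary (\ref{finite2})(1)), each contradicting that ${\rm Bir}(X)$ is infinite.
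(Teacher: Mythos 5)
Your argument coincides with the paper's up to the decisive step: finiteness of ${\rm Aut}(X)$ via Proposition (\ref{finite3}) and Corollary (\ref{finite2}), the case $q_X(x_2)>0$ via Proposition (\ref{hk1}) and Theorem (\ref{kawamata}), and the observation that $q_X(x_2)=0$ forces $\ell_2$ to be a boundary ray of $\overline{\rm Mov}\,(X)$ are all exactly as in the paper. The gap is in how you exclude the remaining alternative (both $m_i$ irrational, ${\rm Bir}(X)$ infinite). Your primary route needs $\Delta$ to be a chamber contained in $\overline{\rm Amp}\,(X)$ with $\overline{\rm Amp}\,(X)=\bigcup_{g\in{\rm Aut}(X)}g^*\Delta$; that is strictly stronger than Theorem (\ref{hk2}) as stated (an abstract tiling of $\overline{\rm Mov}\,(X)$ by ${\rm Bir}(X)$-translates, with no relation to the nef cone), and you flag but do not close this. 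Your fallback does not repair it: for $g\in{\rm Bir}(X)\setminus{\rm Aut}(X)$ there is no reason that $g^*x_1=x_1$ --- only automorphisms preserve the nef cone and hence the ray $\ell_1$; a general birational self-map moves $\ell_1$ to some other wall. Without that identity, the case in which $g^*$ fixes each ray $m_i$ does not force $g\in{\rm Aut}(X)$: an integral $q_X$-isometry of determinant $\pm 1$ fixing two irrational rays with eigenvalues $\lambda,\lambda^{-1}$, $\lambda\neq 1$, satisfies every constraint you list and has infinite order. This is precisely the behavior realized in Proposition (\ref{ex2}), so it cannot be ruled out by determinant, primitivity and $q_X$-invariance alone; and your case ``$g^*$ swaps the $m_i$'' reduces, after squaring, back to the unresolved fixed case.

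What the paper does instead, using only the weak Theorem (\ref{hk2}): if $\ell_2$ were irrational, there would be infinitely many $g_k\in{\rm Bir}(X)$ with pairwise distinct $g_k^*$ such that the chambers $g_k^*\Delta$ accumulate projectively at $\ell_2$. Since $\ell_2$ is a common boundary ray of $\overline{\rm Amp}\,(X)\subset\overline{\rm Mov}\,(X)$, all but finitely many of these chambers lie inside $\overline{\rm Amp}\,(X)$; then for an interior integral point $a\in\Delta$ both $g_{k_0}^*(a)$ and $g_k^*(a)$ are ample, so $(g_{k_0}^{-1}g_k)^*$ maps an ample class to an ample class and $g_{k_0}^{-1}g_k\in{\rm Aut}(X)$ for all large $k$. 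This places infinitely many distinct elements in the finite coset $g_{k_0}{\rm Aut}(X)$, a contradiction. You should replace your exclusion step with this accumulation argument, or else actually establish the refined chamber statement (that $\Delta$ may be taken inside $\overline{\rm Amp}\,(X)$ with ${\rm Aut}(X)$-translates covering it) before invoking it.
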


\begin{proof} By Proposition (\ref{finite3})(3), ${\rm Aut}\, (X)$ is a finite group. If $q_X(x_2) > 0$, then $x_2 \in {\rm Big}\, (X)$ by 
Proposition (\ref{hk1})(2). Therefore $\ell_2$ is rational by Theorem (\ref{kawamata}) and we are done.
So, we may assume that $q_X(x_2) = 0$. Then, by Proposition (\ref{hk1})(1) and 
by $\overline{{\rm Amp}}\, (X) \subset \overline{{\rm Mov}}\, (X)$, the half line $\ell_2 = {\mathbf R}_{\ge 0}x_2$ is also a boundary ray of $\overline{\rm Mov}\, (X)$. 

{\it From now, assuming to the contrary that $\ell_2$ is irrational, we shall derive a contradiction.} By Theorem (\ref{hk2}), we have 
$$\overline{\rm Mov}\, (X) = \overline{\cup_{g \in {\rm Bir}\,(X)} g^*\Delta}\,\, , \,\, (g^*\Delta)^o \cap (\Delta)^o = \emptyset\,\, ,$$
unless $g^* = id$. Let $p_i$ ($i = 1$, $2$) be the two boundary rays of $\Delta$. Since $(g_1^*\Delta)^o \cap (g_2^*\Delta)^o = \emptyset$ for $g_1^* \not= g_2^*$, it follows that $g^*p_1$ and $g^*p_2$ ($g \in {\rm Bir}\, (X)$) decompose the (irrational) polyhedoral cone $\overline{\rm Mov}\, (X)$ into infinite (rational) chambers $g^*\Delta$. Since $\ell_2$ is irrational, there is then a sequence $\{g_k \in {\rm Bir}\, (X)\}_{k \ge 0}$ such that $g_k^* \not= g_{k'}^*$ for $k \not= k'$ and 
$$g_{k}^*\Delta \to \ell_2\,\, ,\,\, (k \to \infty)\,\, ,$$
where the limit is taken inside the compact set 
$$({\rm NS}\, (X)_{\mathbf R} \setminus \{0\})/{\mathbf R}_{> 0}^{\times}\,\, .$$
 Since $\ell_2$ is a boundary ray of both 
$$\overline{{\rm Amp}}\, (X) \subset \overline{{\rm Mov}}\, (X)\,\, ,$$ 
there is then $k_0$ such that 
$$g_{k}^*\Delta \subset \overline{{\rm Amp}}\, (X)$$
for all $k \ge k_0$. However, then for any interior integral point $a$ of $\Delta$, we have 
$$b := g_{k_{0}}^*(a) \in {\rm Amp}\, (X)\,\, ,\,\, 
g_k^*(a) \in {\rm Amp}\, (X)$$ 
and therefore $(g_{k_{0}}^{-1}g_k)^{*}(b) \in {\rm Amp}\, (X)$. Hence, 
$g_{k_{0}}^{-1}g_k \in {\rm Aut}\, (X)$, and therefore 
$g_k \in g_{k_{0}}{\rm Aut}\, (X)$ for all $k \ge k_0$. Here $\{g_k \vert k \ge k_0\}$ is an infinite set but ${\rm Aut}\, (X)$ is a finite set, a 
contradiction. Hence $\ell_2$ must be rational.
\end{proof}

\begin{lemma}\label{hk6}
Assume that $\ell_1 = {\mathbf R}_{\ge 0}x_1$ is irrational. 
Then $\ell_2$ is also irrational. Moreover, 
$\overline{{\rm Amp}}\, (X) = \overline{{\rm Mov}}\, (X) = \overline{P(X)}$ and ${\rm Bir}\, (X) = {\rm Aut}\, (X)$ is an infinite group.
\end{lemma}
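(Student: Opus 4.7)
The plan is to first deduce the irrationality of $\ell_2$, then identify both boundary rays $x_1,x_2$ of $\overline{{\rm Amp}}(X)$ as the two isotropic rays of $q_X$ on ${\rm NS}(X)_{\mathbf R}$, from which all the cone equalities will follow, and finally to upgrade every birational automorphism to a biregular one and conclude infiniteness via Lemma \ref{hk4}.

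The first step is immediate from Lemma \ref{hk5} with the roles of $\ell_1$ and $\ell_2$ swapped: if $\ell_2$ were rational, then $\ell_1$ would also be rational, contrary to hypothesis. For the second step, which is the crux, I will show $q_X(x_1)=q_X(x_2)=0$. Since $\overline{{\rm Amp}}(X)\subset\overline{P(X)}$ by Proposition \ref{hk1}(1), we have $q_X(x_i)\ge 0$. If $q_X(x_i)>0$ for some $i$, then $x_i\in P(X)\subset {\rm Big}(X)$ by Proposition \ref{hk1}(2), and Theorem \ref{kawamata} then forces $\overline{{\rm Amp}}(X)$ to be locally rational polyhedral near $x_i$, contradicting the irrationality of $\ell_i$. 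Hence $q_X(x_i)=0$. Since $q_X\vert {\rm NS}(X)_{\mathbf R}$ has signature $(1,1)$ on the two-dimensional space ${\rm NS}(X)_{\mathbf R}$, the null set $\{q_X=0\}$ consists of exactly two lines through the origin, whose half-rays lying on the $P(X)$-side bound $\overline{P(X)}$. Thus the pair $\{x_1,x_2\}$ must coincide with the pair of boundary rays of $\overline{P(X)}$, giving $\overline{{\rm Amp}}(X)=\overline{P(X)}$. Combined with the sandwich $\overline{{\rm Amp}}(X)\subset\overline{{\rm Mov}}(X)\subset\overline{P(X)}$ from Proposition \ref{hk1}(1), we conclude $\overline{{\rm Amp}}(X)=\overline{{\rm Mov}}(X)=\overline{P(X)}$.

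For the equality ${\rm Bir}(X)={\rm Aut}(X)$: any $g\in{\rm Bir}(X)$ induces a linear automorphism of ${\rm NS}(X)_{\mathbf R}$ preserving $\overline{{\rm Mov}}(X)$, which equals $\overline{{\rm Amp}}(X)$ by what we have just shown; hence it preserves the open interior ${\rm Amp}(X)$. Choosing any ample class $H$, we get $g^*H\in{\rm Amp}(X)$, and by the criterion recalled in Section 2 (\cite[Lemma 1.5]{Ka97}), $g\in{\rm Aut}(X)$. Infiniteness then follows from Lemma \ref{hk4} applied to the boundary rays $m_i=\ell_i$ of $\overline{{\rm Mov}}(X)$, which are irrational by what we have established.

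The main obstacle is the key step $q_X(x_i)=0$, which requires combining two rather different inputs: Kawamata's local rationality of $\overline{{\rm Amp}}(X)\cap{\rm Big}(X)$ (Theorem \ref{kawamata}) and Huybrechts' projectivity criterion $P(X)\subset{\rm Big}(X)$ (Proposition \ref{hk1}(2)). Once $x_1,x_2$ are forced onto the null cone of $q_X$, the $(1,1)$-signature closes the argument cleanly, and the remaining steps reduce to the characterization of automorphisms via ample classes together with the previously proven Lemma \ref{hk4}.
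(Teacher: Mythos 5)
Your proof is correct and follows essentially the same route as the paper: deduce irrationality of $\ell_2$ from Lemma (\ref{hk5}), force $q_X(x_1)=q_X(x_2)=0$ via Theorem (\ref{kawamata}) and Proposition (\ref{hk1})(2), use the signature $(1,1)$ to identify the cones, and then upgrade birational maps to automorphisms because they preserve $\overline{{\rm Amp}}\,(X)=\overline{{\rm Mov}}\,(X)$. The only difference is that you spell out the vanishing $q_X(x_i)=0$ and the null-cone identification in more detail than the paper, which simply refers back to the first part of the proof of Lemma (\ref{hk5}).
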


\begin{proof} If $\ell_2$ would be rational, then $\ell_1$ would be rational by Lemma (\ref{hk5}). Hence $\ell_2$ must be irrational. For the same reason as in the first part of the proof of Lemma (\ref{hk5}), it follows that 
$$q_X(x_1) = q_X(x_2) = 0\,\, .$$ 
Combining this with Proposition (\ref{hk1})(1), we obtain the required equalities of the cones. 
Since the boundary rays of $\overline{{\rm Mov}}\, (X) = \overline{{\rm Amp}}\, (X)$ are irrational, it follows that ${\rm Bir}\, (X)$ is an infinite group by Lemma (\ref{hk4}). Moreover, for $g \in {\rm Bir}\, (X)$, we have $g^*(\overline{{\rm Mov}}\, (X)) = \overline{{\rm Mov}}\, (X)$ and therefore $g^*(\overline{{\rm Amp}}\, (X)) = \overline{{\rm Amp}}\, (X)$ by $\overline{{\rm Mov}}\, (X) = \overline{{\rm Amp}}\, (X)$. Thus $g \in {\rm Aut}\, (X)$. Hence 
${\rm Bir}\, (X) \subset {\rm Aut}\, (X)$. This completes the proof.
\end{proof}

Theorem (\ref{abc})(2) follows from Lemma (\ref{hk3}) and Lemma (\ref{hk4}) 
and Theorem (\ref{abc})(1) follows from Lemma (\ref{hk5}) and Lemma (\ref{hk6}). 
\begin{remark}\label{sarti} Boissiere and Sarti proved a remarkable result that ${\rm Bir}\, (X)$ is finitely generated for any projective hyperk\"ahler manifold $X$ (\cite[Theorem 2]{BS09}). On the other hand, it seems unknown if ${\rm Aut}\, (X)$ is finitely generated or not (\cite[Question 1]{BS09}). When $\rho(X) = 2$, Theorem (\ref{abc}) says that ${\rm Aut}\, (X)$ is either a finite group or equal to ${\rm Bir}\, (X)$. So, ${\rm Aut}\, (X)$ is always finitely generated when $\rho(X) = 2$.  
\end{remark}

\section{Proof of Theorem (\ref{abc}) (3).}

In this section, we shall prove Theorem (\ref{abc}) (3). 

We begin with the following general result which is kindly taught us by the referee:

\begin{proposition}\label{referee}
Let $X$ be a projective hyperk\"ahler manifolds of Picard number $2$. 
If $X$ contains an effective divisor $E$ such that $q_X(E) < 0$, then 
${\rm Bir}\, (X)$ is finite and $\overline{\rm Mov}\, (X)$ is rational.
\end{proposition}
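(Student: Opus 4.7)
\emph{Plan of proof.} The plan is to prove that ${\rm Bir}(X)$ is finite; the rationality of $\overline{\rm Mov}(X)$ then follows at once from Theorem~\ref{abc}(2). I would argue by contradiction, so suppose ${\rm Bir}(X)$ is infinite. Since ${\rm Ker}\,r$ is finite by Proposition~\ref{finite}, the image $r({\rm Bir}(X))$ is also infinite. By Burnside (Theorem~\ref{burnside}) it must have unbounded exponent, and since torsion elements in ${\rm GL}(2,{\mathbf Z})$ have order at most $6$, there is some $\gamma \in r({\rm Bir}(X))$ of infinite order. After replacing $\gamma$ by $\gamma^2$ we may assume $\det \gamma = 1$. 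Because $\gamma$ preserves the indefinite form $q_X$ on ${\rm NS}(X)_{\mathbf R} \cong {\mathbf R}^2$ and cannot have eigenvalues on the unit circle (otherwise they would be roots of unity, making $\gamma$ torsion), $\gamma$ has distinct real eigenvalues $\lambda, \lambda^{-1}$ with $\lambda > 1$ and $q_X$-isotropic real eigenvectors $v_+, v_-$ satisfying $q_X(v_+, v_-) \neq 0$; I rescale so that $q_X(v_+, v_-) > 0$.

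Next I would pin down the boundary of $\overline{\rm Mov}(X)$. Its two boundary rays are permuted by $\gamma$, hence preserved as rays by $\gamma^2$. But $\gamma^2$ has only four rays in ${\rm NS}(X)_{\mathbf R}$ mapped to themselves, namely ${\mathbf R}_{\geq 0}(\pm v_+)$ and ${\mathbf R}_{\geq 0}(\pm v_-)$. Since $\overline{\rm Mov}(X) \subset \overline{P(X)}$ by Proposition~\ref{hk1}(1) and the two null boundary rays of $\overline{P(X)}$ can be written (after adjusting signs of $v_+, v_-$) as ${\mathbf R}_{\geq 0}v_+$ and ${\mathbf R}_{\geq 0}v_-$, the boundary rays of $\overline{\rm Mov}(X)$ must be precisely these two. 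In particular, $\overline{\rm Mov}(X) = \overline{P(X)}$.

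Now I would bring in the effective divisor $E$. Writing $[E] = av_+ + bv_-$, the hypothesis $q_X(E) = 2ab\, q_X(v_+, v_-) < 0$ forces $ab < 0$; say $a > 0 > b$. For each $g \in {\rm Bir}(X)$ the class $g^* E$ is again effective, since $g$ is an isomorphism in codimension one on the Calabi-Yau manifold $X$. Hence every iterate $\gamma^n [E] = a\lambda^n v_+ + b\lambda^{-n} v_-$ lies in the closed pseudo-effective cone $\overline{\rm Big}(X)$. As $n \to -\infty$ the second term dominates, and because $b < 0$ the projective class tends to $[-v_-]$; closedness of $\overline{\rm Big}(X)$ then gives ${\mathbf R}_{\geq 0}(-v_-) \subset \overline{\rm Big}(X)$. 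On the other hand ${\mathbf R}_{\geq 0}v_- \subset \overline{\rm Mov}(X) = \overline{P(X)} \subset \overline{\rm Big}(X)$. So $\overline{\rm Big}(X)$ contains the entire line ${\mathbf R} v_-$, contradicting Proposition~\ref{strict}.

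The main obstacle is the first two paragraphs: extracting a hyperbolic $\gamma \in r({\rm Bir}(X))$ and showing that its two null eigenrays must coincide with the (irrational) boundary rays of $\overline{\rm Mov}(X)$, so that $\overline{\rm Mov}(X) = \overline{P(X)}$. Once this dynamical picture is set up, pushing the orbit of $[E]$ to the null boundary and then invoking strictness of $\overline{\rm Big}(X)$ is essentially one-dimensional hyperbolic bookkeeping.
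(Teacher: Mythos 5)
Your proof is correct, but it takes a genuinely different route from the paper's. The paper's argument is two lines: replace $E$ by an irreducible component and invoke Markman's structure theory of prime exceptional divisors (\cite[Lemmas 6.20, 6.22]{Ma11}) to see that $[E]$ cuts out a boundary ray of ${\rm Mov}\,(X)$ which is manifestly rational; Theorem (\ref{abc})(2) then yields both conclusions at once. You instead establish finiteness of ${\rm Bir}\,(X)$ first, by a self-contained dynamical argument: extract a hyperbolic element $\gamma$ of $r({\rm Bir}\,(X))$, identify $\overline{\rm Mov}\,(X)$ with $\overline{P(X)}$ via the eigenrays of $\gamma^2$, and drive the orbit of the effective class $[E]$ onto the ray ${\mathbf R}_{\ge 0}(-v_-)$, contradicting strictness of $\overline{\rm Big}\,(X)$ (Proposition (\ref{strict})); only then do you call on Theorem (\ref{abc})(2) for the rationality statement. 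What your approach buys is independence from Markman's Lemmas 6.20 and 6.22 (his Theorem (\ref{hk2}) still enters, but only through Theorem (\ref{abc})(2) and only for the rationality half); what it costs is length. One step should be tightened: for an integral $2\times 2$ matrix, eigenvalues on the unit circle are indeed roots of unity, but that alone does not make $\gamma$ torsion --- a unipotent element has eigenvalue $1$ and infinite order. You must also use that $\gamma$ preserves the nondegenerate signature-$(1,1)$ form $q_X$, which rules out parabolic isometries; with that observation an infinite-order element of $r({\rm Bir}\,(X))$ is automatically hyperbolic, and the rest of your argument goes through.
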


\begin{proof} By replacing $E$ by one of its irreducible components, we may assme that $E$ is irreducible and reduced. The ray 
$$\{x \in P(X)\, \vert\, (x, [E]) <0\}$$
is then a boundary ray of ${\rm Mov}\, (X)$ by \cite[Lemmas 6.20, 6.22]{Ma11}. 
This is clearly a rational ray. Therefore, the result follows from 
Theorem (\ref{abc})(2). 
\end{proof}

Standard series of projective hyperk\"ahler manifolds of Picard number $2$ 
are examples of the first case in Theorem (\ref{abc})(1)(2):

\begin{corollary}\label{ex1}
Let $m$ be an integer such that $m \ge 2$. 
\begin{enumerate}
\item
Let $S$ be a projective K3 surface with $\rho(S) = 1$. 
Then $\rho({\rm Hilb}^m\, S) =2$, the boundary rays of $\overline{\rm Amp}\, ({\rm Hilb}^m\, S)$ and $\overline{\rm Mov}\, ({\rm Hilb}^m\, S)$ are rational 
and ${\rm Aut}\, ({\rm Hilb}^m\, S)$ and ${\rm Bir}\, ({\rm Hilb}^m\, S)$ are finite groups. 

\item 
Let $A$ be an abelian surface with $\rho(A) = 1$. 
Then $\rho(K_{m}(A)) =2$, the boundary rays of $\overline{\rm Amp}\, (K_{m}(A))$ and $\overline{\rm Mov}\, (K_{m}(A))$ are rational 
and ${\rm Aut}\, (K_{m}(A))$ and ${\rm Bir}\, (K_{m}(A))$ are finite groups. 
\end{enumerate}
\end{corollary}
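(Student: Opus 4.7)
The plan is to apply Proposition \ref{referee} to reduce both statements to exhibiting an effective divisor with negative Beauville-Bogomolov square on each of the relevant hyperk\"ahler manifolds. Once that divisor is produced, everything else follows by appealing to Theorem \ref{abc}.

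First I would establish that the Picard number is $2$ in both cases, using the standard Beauville-Fogarty description of the second cohomology. For a projective K3 surface $S$ with $\rho(S)=1$, the integral Hodge decomposition gives
\[
H^2(\mathrm{Hilb}^m S,\Z) = H^2(S,\Z) \oplus \Z\, e,
\]
where $2e$ is the class of the exceptional divisor $E$ of the Hilbert--Chow morphism $\mathrm{Hilb}^m S \to \mathrm{Sym}^m S$, and this decomposition is compatible with Hodge structure. Hence $\mathrm{NS}(\mathrm{Hilb}^m S) = \mathrm{NS}(S) \oplus \Z e$ has rank $2$. For the generalized Kummer variety, the analogous Beauville description gives $H^2(K_m(A),\Z) = H^2(A,\Z) \oplus \Z\, e'$ with $2e'$ the class of the corresponding exceptional divisor, so $\rho(K_m(A)) = \rho(A)+1 = 2$.

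Next I would exhibit the required effective divisor with $q_X<0$. In both cases the Hilbert--Chow exceptional divisor is manifestly effective and irreducible, and the classical Beauville computation of the form on these model examples yields $q_X(e) = -2(m-1)$ (respectively $q_X(e') = -2(m+1)$ for the Kummer case with the standard normalization), which is negative for $m \geq 2$. This computation can be quoted directly from \cite{GHJ03}. Thus in both situations we have an effective divisor with strictly negative Beauville-Bogomolov square.

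Applying Proposition \ref{referee} then immediately gives that $\mathrm{Bir}(X)$ is finite and $\overline{\mathrm{Mov}}(X)$ has both boundary rays rational. Since $\mathrm{Aut}(X)\subset\mathrm{Bir}(X)$, the automorphism group is finite as well, and by Theorem \ref{abc}(3) the boundary rays of $\overline{\mathrm{Amp}}(X)$ are then also rational. The only genuinely non-formal step is the sign of $q_X$ on the exceptional class, but this is a classical normalization and requires no new computation.
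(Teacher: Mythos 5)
Your proof is correct and follows essentially the same route as the paper: exhibit the Hilbert--Chow exceptional divisor as an effective divisor of negative Beauville--Bogomolov square and invoke Proposition (\ref{referee}), the Picard number computation being the standard Beauville decomposition of $H^2$. The only cosmetic remark is that for the rationality of the nef boundary rays it is cleanest to quote the dichotomy of Theorem (\ref{abc})(1) (finiteness of ${\rm Aut}$ forces both rays of $\overline{\rm Amp}$ to be rational), which is established in Section 4 independently of this corollary.
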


\begin{proof} The exceptional divisor $E \subset {\rm Hilb}^m\, S$ of the Hilbert-Chow morphism satisfies $q_{{\rm Hilb}^m\, S}(E) < 0$. So, Proposition (\ref{referee}) implies (1) and similarly (2). 
\end{proof}

We recall a lattice isomorphism 
$$(H^2({\rm Hilb}^2\, S, {\mathbf Z}), q_{{\rm Hilb}^2\, S}(x)) \, \simeq \Lambda := U^{\oplus 3} \oplus E_8(-1)^{\oplus 2} \oplus \langle -2 \rangle$$
for the Hilbert scheme ${\rm Hilb}^2\, S$ of a K3 surface $S$ (\cite[Page 187]{GHJ03}). Here $U = {\mathbf Z}e \oplus {\mathbf Z}f$ is a lattice of rank $2$ with bilinear form given by 
$$(e,e)_U = (f, f)_U = 0\,\, ,\,\, (e, f)_U = 1$$
and $E_8(-1)$ is a negative definite even unimodular lattice of rank $8$. 
So, $\Lambda$ is a lattice of signature $(3, 20)$. 
We also consider the lattice 
$$L := {\mathbf Z} h_1 \oplus {\mathbf Z} h_2$$ 
of rank $2$ whose bilinear form is defined by
$$(h_1, h_1)_L = (h_2, h_2)_L = 4\,\, ,\,\, (h_1, h_2)_L = 8\,\, .$$
\begin{proposition}\label{ex2}
Let $S$ be a K3 surface.
\begin{enumerate}
\item
There is a projective hyperk\"ahler manifold such that 
$X$ is deformation equivalent to ${\rm Hilb}^2\, S$ and ${\rm NS}\, (X) \simeq L$ as lattices (where the lattice structure on ${\rm NS}\, (X)$ is the one given by the Beauville-Bogomolov form of $X$).
\item 
For this $X$, both boundary rays of $\overline{{\rm Amp}}\, (X)$ are 
irrational. In particular, $X$ is an example of the second case in 
Theorem (\ref{abc})(1)(2).
\end{enumerate}
\end{proposition}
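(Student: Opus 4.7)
For part (1), my plan is to construct $X$ from its Hodge-theoretic data via Huybrechts' surjectivity of the period map. First I check that $L$ has signature $(1,1)$: its Gram matrix has determinant $4 \cdot 4 - 8^{2} = -48$ and positive trace. Next I exhibit a primitive embedding $\iota : L \hookrightarrow \Lambda$ by hand. Using one copy of $U = {\mathbf Z}e \oplus {\mathbf Z}f$ and a primitive vector $w \in E_8(-1)$ of square $-8$, set $\iota(h_1) = 2e+f$ and $\iota(h_2) = 2e + 3f + w$. A direct check gives $q(\iota(h_i))=4$ and $(\iota(h_1),\iota(h_2))=8$, and primitivity of the embedding follows from the primitivity of $w$ in $E_8(-1)$ (such a $w$ exists because $E_8(-1)$ primitively represents every even integer $\le -2$). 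Choosing a very general line $[\sigma] \in {\mathbf P}(\iota(L)^{\perp} \otimes {\mathbf C})$ that satisfies the period conditions and invoking the surjectivity of the period map, I obtain a marked $K3^{[2]}$-type manifold $(X,\eta)$ with $\eta({\rm NS}\,(X)) = \iota(L)$. Projectivity follows from Huybrechts' projectivity criterion applied to the positive class $\iota(h_1)$.

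For part (2), I first locate the boundary of $\overline{P(X)}$. In the basis $\{h_1, h_2\}$ of ${\rm NS}\,(X)_{\mathbf R}$, the Beauville-Bogomolov form reads $q_X(ah_1 + bh_2) = 4(a^2 + 4ab + b^2)$, so its zero locus consists of the two rays of slope $b/a = -2 \pm \sqrt{3}$, both irrational. By Proposition~\ref{hk1} we have $\overline{{\rm Amp}}\,(X) \subset \overline{P(X)}$, so it suffices to establish the reverse inclusion.

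For $K3^{[2]}$-type manifolds, the theorems of Hassett--Tschinkel and Markman describe every wall of the ample cone as orthogonal to an integral $(1,1)$-class $v$ satisfying either $q_X(v) = -2$ (divisorial wall) or $q_X(v) = -10$ with divisibility $2$ (flop wall). However, $q_X$ takes values in $4{\mathbf Z}$ on $L$, so no element of ${\rm NS}\,(X) = L$ has square $-2$ or $-10$. Hence there are no walls, $\overline{{\rm Amp}}\,(X) = \overline{P(X)}$, and both boundary rays of $\overline{{\rm Amp}}\,(X)$ are the two irrational isotropic rays computed above. The main difficulty is the appeal to the correct wall characterization for $K3^{[2]}$-type geometry; an alternative for (2) would be to exhibit an explicit infinite-order Hodge isometry of $L$, lift it via Markman's results to an element of ${\rm Bir}\,(X)$, and then apply Theorem~\ref{abc}(1),(2) to force the ample and movable cones to coincide with $\overline{P(X)}$ and hence be irrational.
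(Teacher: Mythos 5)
Your proposal is correct and follows essentially the same route as the paper: realize $L$ as a primitively embedded sublattice of $\Lambda$, take a very general period in $\iota(L)^{\perp}$ and invoke surjectivity of the period map plus the projectivity criterion for (1), then use the Hassett--Tschinkel wall description together with $q_X(L)\subset 4\mathbf{Z}$ (no $-2$ or $-10$ classes) to get $\overline{{\rm Amp}}\,(X)=\overline{P(X)}$ and the irrationality of its isotropic boundary rays for (2). The only difference is cosmetic: the paper embeds $L$ into $U^{\oplus 3}$ via $h_1\mapsto 2e_1+e_2+2f_2$, $h_2\mapsto 4f_1+e_3+2f_3$, while you use one hyperbolic plane plus a primitive norm $-8$ vector in $E_8(-1)$; both embeddings are easily checked to be primitive.
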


\begin{proof} For $xh_1 + yh_2 \in L$ ($x, y \in {\mathbf Z}$), we have
$$(xh_1 + yh_2, xh_1 + yh_2)_L = 4(x^2 + 4xy + y^2) = 
4((x+2y)^2 - 3y^2)\,\, .$$
In particular, $L$ is an even lattice of signature $(1,1)$. 
This lattice $L$ admits a primitive embedding 
into $U^{\oplus 3}$ as lattices, given by  
$$h_1 \mapsto 2e_1 + e_2 +2f_2\,\, ,\,\, h_2 \mapsto 4f_1 + e_3 + 2f_3$$
where $e_i, f_i$ is the standard basis of the $i$-th $U$ ($i = 1$, $2$, $3$). 
This embedding naturally define a primitive embedding $L \subset \Lambda$ 
via the standard embedding $U^{\oplus 3} \subset \Lambda$. 
Then 
$$L^{\perp} := \{x \in \Lambda \vert (x, L) = 0\}$$ 
is a lattice of singnature $(2, 19)$. 
Let us choose a positive real $2$-plane $P$ in $(L^{\perp})_{\mathbf R}$ which is not in any rational hyperplanes of $(L^{\perp})_{\mathbf R}$. 
This is possible, because positive real two planes form an open subset in the 
real Grassmanian manifold ${\rm Gr}\, (2, (L^{\perp})_{\mathbf R})$. 
Let $\langle x, y \rangle$ be an orthonormal basis of $P$ and set 
$$\sigma := x + \sqrt{-1} y \in \Lambda_{\mathbf C}\,\, .$$ 
Then, by the choice of $P$ and the primitivity of $L$, we obtain that
$$(\sigma, \sigma) = 0\,\, ,\,\, (\sigma, \overline{\sigma}) > 0\,\, ,\,\, (\sigma)^{\perp} \cap \Lambda = L\,\, .$$
Then, by the surjectivity of the period map due to Huybrechts (\cite[25.4]{GHJ03}), there is a hyperk\"ahler manifold $X$ which is deformation equivalent to ${\rm Hilb}^2\, S$ and ${\rm NS}\, (X) \simeq L$. Since $L$ is of signature $(1,1)$, $X$ is projective 
by the projectivity criterion (\cite[26.4]{GHJ03}). Moreover for any $v \in {\rm NS}\, (X) \simeq L$, we have $4 \vert q_X(v)$ from the explicit formula in $L$ above. Hence there is no $v \in {\rm NS}\, (X)$ such that $q_X(v) = -2$ or $q_X(v) = -10$. 
Thus by a result of Hassett and Tschinkel (\cite[Theorem 23]{HT09}), we obtain 
$$\overline{{\rm Amp}}\, (X) = \overline{P (X)}\,\, .$$ 
Moreover, there is no $v \in {\rm NS}\, (X)$ such that $q_X(v) = 0$ other than $0$ again by the explicit formula in $L$ above. Hence both boundary rays of $\overline{P}\, (X) = \overline{{\rm Amp}}\, (X)$ are irrational. Now, by Theorem (\ref{abc})(2), this $X$ is an example of the second cases of Theorem 
(\ref{abc}) (1), (2). 
\end{proof} 

Theorem (\ref{abc})(3) now follows from Proposition (\ref{ex1}) applied for $m=2$ and Proposition (\ref{ex2}). 

\section{Proof of Proposition (\ref{main3}).}

In this section, we shall prove Proposition (\ref{main3}). This will follow from Proposition (\ref{ex3}).

In what follows, for $X \subset {\mathbf P}^{3} \times {\mathbf P}^{3}$, 
we use the following symbols:

${\mathbf P} := {\mathbf P}^{3} \times {\mathbf P}^{3}$;

$\iota : X \to {\mathbf P}$, the natural inclusion morphism;

$P_i : {\mathbf P} \to {\mathbf P}^{3}$, the natural $i$-th projection ($i = 1$, $2$);  

$p_i := P_i \circ \iota : X \to {\mathbf P}^{3}$, the natural $i$-th projection from $X$ ($i = 1$, $2$);

$L_i := {\mathcal O}_{{\mathbf P}^{3}}(1)$, the hyperplane bundle of ${\mathbf P}^{3}$ ($i = 1$, $2$);

$H_i := P_i^*L_i$, the line bundle which is the pull back of the hyperplane bundle $L_i$ to  ${\mathbf P}$ ($i = 1$, $2$);; 

$h_i := p_i^*L_i = \iota^*H_i$, the line bundle which is the pull back of the hyperplane bundle $L_i$ to $X$ ($i = 1$, $2$); 

\begin{proposition}\label{ex3}
Let $X$ be a general complete intersection of three hypersurfaces of bidegree $(1,1)$, $(1,1)$ and $(2,2)$ in ${\mathbf P}^3 \times {\mathbf P}^3$. Then, 
$X$ is a smooth Calabi-Yau threefold such that 
$${\rm NS}(X) \simeq {\rm Pic}\, X = {\mathbf Z}h_1 \oplus {\mathbf Z}h_2\,\, ,\,\, 
\overline{{\rm Amp}}\, (X) = {\mathbf R}_{\ge 0} h_1 +  {\mathbf R}_{\ge o} h_2\,\, ,$$ 
$$\overline{{\rm Mov}}\, (X) = {\mathbf R}_{\ge 0} (-h_1 + (3+2\sqrt{2})h_2)  
+ {\mathbf R}_{\ge 0} ((3+2\sqrt{2})h_1 - h_2)\,\, ,$$ 
$${\rm Bir}\, (X) = \langle {\rm Aut}\, (X), \tau_1, \tau_2 \rangle$$ 
where $\tau_1$ and $\tau_2$ are birational involutions of $X$ 
and $\tau_1^* \tau_2^*$ is of infinite order.  
\end{proposition}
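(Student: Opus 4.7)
The plan is to verify the assertions in sequence.

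First, I would establish the Calabi--Yau structure and the Picard group. Smoothness of a general complete intersection follows from Bertini applied successively to the very ample line bundles $\mathcal{O}(1,1)$ and $\mathcal{O}(2,2)$ on $\mathbf{P}^3 \times \mathbf{P}^3$; adjunction against $K_{\mathbf{P}^3 \times \mathbf{P}^3} = \mathcal{O}(-4,-4)$ with the three hypersurfaces of total bidegree $(4,4)$ yields $K_X = \mathcal{O}_X$. Applying the Lefschetz hyperplane theorem iteratively along the three ample hypersurfaces gives $H^k(\mathbf{P}^3 \times \mathbf{P}^3, \mathbf{Z}) \xrightarrow{\sim} H^k(X, \mathbf{Z})$ for $k < 3$; in particular $h^1(\mathcal{O}_X) = 0$ and $\mathrm{Pic}(X) = \mathrm{NS}(X) = \mathbf{Z}h_1 \oplus \mathbf{Z}h_2$. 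From $[X] = 2(H_1 + H_2)^3$ in the ambient Chow ring, I would compute $h_1^3 = h_2^3 = 2$ and $h_1^2 h_2 = h_1 h_2^2 = 6$. The projection $p_1 \colon X \to \mathbf{P}^3$ is generically $2$-to-$1$ (line $\cap$ quadric in $\mathbf{P}^3_y$), but over the finite locus in $\mathbf{P}^3_x$ where $F_1|_x, F_2|_x$ are linearly dependent the fiber degenerates to a conic; thus $h_1$ (and symmetrically $h_2$) is nef but not ample, and strictness of $\overline{\mathrm{Amp}}(X)$ then forces $\overline{\mathrm{Amp}}(X) = \mathbf{R}_{\ge 0} h_1 + \mathbf{R}_{\ge 0} h_2$.

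Next I would construct the birational involutions. The Galois involution $\tau_i$ of the $2$-to-$1$ cover $p_i$ is well defined on the étale locus and extends to a birational involution of $X$; since $p_1 \circ \tau_1 = p_1$, we have $\tau_1^* h_1 = h_1$. To find $\tau_1^* h_2$, fix a generic hyperplane $H \subset \mathbf{P}^3_y$ and put $D := p_2^{-1}(H) \in |h_2|$. For each $x \in \mathbf{P}^3_x$ the line $\ell_x = V(F_1|_x, F_2|_x)$ meets $H$ in a unique point $y_*(x)$ whose projective coordinates are the $3 \times 3$ minors of a $3 \times 4$ matrix with two rows linear in $x$ and one constant row, hence each is of degree $2$ in $x$; consequently $G(x, y_*(x))$ has degree $6$ in $x$. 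Since for generic $x$ the unique preimage of $x$ in $D$ is $(x, y_*(x))$, with its $\tau_1$-image landing in $\tau_1(D)$, one obtains the divisor identity $D + \tau_1(D) = p_1^{-1}(p_1(D))$, of class $6 h_1$; hence $\tau_1^* h_2 = 6 h_1 - h_2$. By symmetry $\tau_2^* h_2 = h_2$ and $\tau_2^* h_1 = 6 h_2 - h_1$; and since $6h_1 - h_2$ is not ample, $\tau_1 \notin \mathrm{Aut}(X)$.

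In the basis $(h_1, h_2)$ the matrix $\tau_1^* \tau_2^* = \begin{pmatrix} 35 & 6 \\ -6 & -1 \end{pmatrix}$ has trace $34$, determinant $1$, and irrational eigenvalues $17 \pm 12\sqrt{2}$, so $\tau_1^* \tau_2^*$ is of infinite order; its eigenrays are precisely $\mathbf{R}_{\ge 0}((3+2\sqrt{2})h_1 - h_2)$ and $\mathbf{R}_{\ge 0}(-h_1 + (3+2\sqrt{2})h_2)$. I would then argue that $\overline{\mathrm{Mov}}(X)$---a strict closed convex cone (Proposition~\ref{strict}) stable under $\tau_1^*$ and $\tau_2^*$---projectivizes to a closed arc in $\mathbf{P}(\mathrm{NS}(X)_{\mathbf{R}})$ containing the ample ray; iterating $(\tau_1^* \tau_2^*)^{\pm n}$ on an ample class shows that both projective fixed points of $\tau_1^* \tau_2^*$ lie in $\overline{\mathrm{Mov}}(X)$, and strictness forces the arc to end exactly at these two points, giving the claimed description. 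Finally, for any $\phi \in \mathrm{Bir}(X)$ the chamber $\phi^*(\overline{\mathrm{Amp}}(X)) \subset \overline{\mathrm{Mov}}(X)$ is a nef cone of a birational model of $X$; by the chamber decomposition of $\overline{\mathrm{Mov}}(X)$ into nef cones of the models obtained by iteratively flopping the conic fibers of $p_1, p_2$, this chamber equals $g^*(\overline{\mathrm{Amp}}(X))$ for some $g \in \langle \tau_1, \tau_2 \rangle$, so $g^{-1}\phi$ preserves $\overline{\mathrm{Amp}}(X)$ setwise and lies in $\mathrm{Aut}(X)$, yielding $\mathrm{Bir}(X) = \langle \mathrm{Aut}(X), \tau_1, \tau_2 \rangle$.

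The main obstacle is the computation of $\tau_1^* h_2$: because $\tau_1$ is only a pseudo-automorphism, top intersection numbers are not preserved---indeed $(6h_1 - h_2)^3 = -110 \ne 2 = h_2^3$---so intersection-form invariance on $\mathrm{NS}(X)$ alone cannot pin down the coefficient, and the explicit geometric identity $D + \tau_1(D) = p_1^{-1}(p_1(D))$ combined with the degree-$6$ analysis of $G(x, y_*(x))$ is essential.
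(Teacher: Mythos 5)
Your proposal follows the same overall skeleton as the paper --- covering involutions $\tau_i$ of the degree-two projections, the identity $h_2+\tau_1^*h_2=p_1^*(p_1)_*h_2=6h_1$, the hyperbolic matrix $\tau_1^*\tau_2^*$ with eigenrays ${\mathbf R}_{\ge 0}((3+2\sqrt2)h_1-h_2)$ and ${\mathbf R}_{\ge 0}(-h_1+(3+2\sqrt2)h_2)$, and Kawamata's flop machinery for ${\rm Bir}(X)$ --- but two sub-steps are done by genuinely different means. First, you compute the coefficient $6$ by explicit elimination (parametrizing $\ell_x\cap H$ by $3\times 3$ minors and substituting into the $(2,2)$-form); the paper instead gets it in one line from the projection formula, $a=((p_1)_*h_2.L_1^2)_{{\mathbf P}^3}=(h_2.h_1^2)_X=6$. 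Both are valid, but your closing remark that the explicit geometric identity is ``essential'' because intersection numbers cannot pin down the coefficient is overstated: invariance of the cubic form under $\tau_1^*$ indeed fails, but the projection formula for $p_1$ (which does not involve $\tau_1$ at all) determines $(p_1)_*h_2$ without coordinates. Second, for $\overline{{\rm Mov}}(X)$ the paper proves the inclusion $\overline{{\rm Mov}}(X)\subset\overline M$ by taking a rational interior class, invoking Proposition (\ref{mov}) and running the log MMP for $(X,\epsilon D)$, identifying each step with one of the flops $\tau_i$ (Lemmas (\ref{flop2})--(\ref{fin})); your argument is softer, using only that $\overline{{\rm Mov}}(X)$ is a strict closed convex cone invariant under the hyperbolic element $\tau_1^*\tau_2^*$. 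This buys you a shorter proof of the cone equality that avoids the MMP, though you still need the flop decomposition (Kawamata) for the statement about ${\rm Bir}(X)$, so the net saving is modest.

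One step should be tightened: ``strictness forces the arc to end exactly at these two points'' is not literally a consequence of knowing that the two eigenrays lie in $\overline{{\rm Mov}}(X)$ and that the cone is strict (a slightly larger strict cone would satisfy both). The correct argument is that $\tau_1^*\tau_2^*$ permutes the two extreme rays of the invariant cone, so each extreme ray is fixed by $(\tau_1^*\tau_2^*)^2$, hence is one of the four half-eigenlines; among the strict full-dimensional sectors with boundary in that set, only $\overline M$ contains the ample cone. With that sentence added, your determination of $\overline{{\rm Mov}}(X)$ is complete. Similarly, your appeal to ``the chamber decomposition of $\overline{{\rm Mov}}(X)$ into nef cones of models obtained by iteratively flopping'' silently uses that the flop of $\nu_i:X\to\overline X_i$ is again $X$ realized via $\tau_i$ (the content of Lemma (\ref{flop2}), proved there by noting that $\tau_1^*h_2=6h_1-h_2$ is relatively anti-ample for $\overline\tau_1^{-1}\circ\nu_1$), and that $\tau_1,\tau_2$ exhaust the flops up to ${\rm Aut}(X)$ because $\overline{{\rm Amp}}(X)$ has only two codimension-one faces; these are easy to supply but should be said.
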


\begin{proof} The fact that $X$ is a smooth Calabi-Yau threefold with ${\rm Pic}\, X = {\mathbf Z}h_1 \oplus {\mathbf Z}h_2$ follows from the Bertini theorem, adjunction formula and the Lefschetz hyperplane section theorem. The projection $p_i$ ($i = 1$, $2$) are of degree $2$ by the shape of the equation above. 
We note that both $p_i$ are {\it not} finite. More precisely, they are small contractions, i.e. contract at least one curves but contract no divisor. This can be seen as follows. Let
$$F_1 := x_0g_0(y) + x_1g_1(y) + x_2g_2(y) + x_3g_3(y) = 0\, ,$$
$$F_2 := x_0h_0(y) + x_1h_1(y) + x_2h_2(y) + x_3h_3(y) = 0\, ,$$
$$F_3 := \sum_{0\le i \le j \le 3}x_ix_jk_{ij}(y) = 0\, ,$$
be the defining equations of $X$. 
Here $([x_0:x_1:x_2:x_3], y = [y_0:y_1:y_2:y_3])$ is the homogeneous coordinate of ${\mathbf P}^3 \times {\mathbf P}^3$, for each $i = 0, 1, 2, 3$
$$g_i(y) = a_{i0}y_0 + a_{i1}y_1 + a_{i2}y_2 + a_{i3}y_3\, ,$$
$$h_i(y) = b_{i0}y_0 + b_{i1}y_1 + b_{i2}y_2 + b_{i3}y_3\, ,$$ 
and $k_{ij}(y)$ are homogeneous polynomial of degree $2$. 
By the genericity assumption, matrices $A = (a_{ij})$ 
and $B = (b_{ij})$ are general $4 \times 4$ complex matrices, in particular, {\it they are invertible, $BA^{-1}$ has four distinct eigenvalues and therefore $BA^{-1}$ is diagonalizable}. 
Let $V$ be the complete intersection $4$-fold defined by the first 
two equations
$$F_1 := x_0g_0(y) + x_1g_1(y) + x_2g_2(y) + x_3g_3(y) = 0\, ,$$
$$F_2 := x_0h_0(y) + x_1h_1(y) + x_2h_2(y) + x_3h_3(y) = 0\, .$$
Consider the first projection $p : V \rightarrow {\mathbf P}^3$. Then for each $P = [p_0:p_1:p_2:p_3] \in {\mathbf P}^3$, the fiber $p^{-1}(P)$ is defined by 
the equations
$$p_0g_0(y) + p_1g_1(y) + p_2g_2(y) + p_3g_3(y) = 0\,\, ,\,\, 
p_0h_0(y) + p_1h_1(y) + p_2h_2(y) + p_3h_3(y) = 0$$ 
in ${\mathbf P}^3$. So, $p^{-1}(P)$ is isomorphic to either a line ${\mathbf P}^1$ or a plane ${\mathbf P}^2$ in ${\mathbf P}^3$, and it is a plane 
exactly when the defining equations of $p^{-1}(P)$ are proportional, i.e., exactly when
$$(p_0, p_1, p_2, p_3)(\alpha A - \beta B) = 0\,\, ,\,\, {\rm i.e.}\,\, ,\,\, (p_0, p_1, p_2, p_3)(\alpha I - \beta BA^{-1}) = 0$$
for some $[\alpha : \beta] \in {\mathbf P}^1$. Here $I$ is the identity matrix. Then such $[\alpha : \beta]$ satisfies ${\rm det}\, (\alpha I - \beta BA^{-1}) = 0$. This is a homogneous 
polynomial of 
degree $4$ with respect to $[\alpha : \beta]$. Hence there are exactly four 
{\it distinct} solutions $[\alpha : \beta]$ and for each solution, ${\rm rank}\, (\alpha I - \beta BA^{-1}) = 3$, by the genericity assumption of $A$ and $B$, which we make explicit above. So, the first projection $p : V \rightarrow {\mathbf P}^3$ has exactly four ${\mathbf P}^2$ as fibers. Since the third equation $F_3 = 0$ of $X$, is also general, it follows that ${\mathbf P}^2 \cap X = {\mathbf P}^2 \cap (F_3 = 0)$ is a curve for each of these $4$ fibers ${\mathbf P}^2$. These curves are contracted by $p_1$, as they are in fibers of $p$. Note that for other fibers of $p$, which are ${\mathbf P}^1$, we have a priori $\dim ({\mathbf P}^1 \cap X = {\mathbf P}^1 \cap (F_3 = 0)) \le 1$. Hence $p_1$ is a small contraction. For the same reason, $p_2$ is also a small contraction.  

Hence $h_1$ and $h_2$ are both semi-ample but none of them is ample. 
From this, we obtain $\overline{{\rm Amp}}\, (X) = {\mathbf R}_{\ge 0} h_1 +  {\mathbf R}_{\ge o} h_2$. 

Let $\tau_i : X \cdots \rightarrow X$ be the covering involution with 
respect to 
$p_i$ ($i =1, 2$). Let 
$$\nu_i : X \rightarrow \overline{X}_i$$
be the Stein factorization of $p_i$ and $\overline{p}_i : \overline{X}_i \rightarrow {\mathbf P}^3$ be the induced morphism. The covering involution $\tau_i$ induces the {\it biregular} involution $\overline{\tau}_i$ of $\overline{X}_i$ 
over ${\mathbf P}^3$. This is because the Stein factorization is unique in the rational funcion field of $X$.

\begin{lemma}\label{matrix}
With respect to the basis $\langle h_1, h_2 \rangle$ of ${\rm NS}\, (X)$, 
$$\tau_1^* = \left(\begin{array}{rr}
1 & 6\\
0 & -1
\end{array} \right)\,\, ,\,\, \tau_2^* = \left(\begin{array}{rr}
-1 & 0\\
6 & 1
\end{array} \right)\,\, .$$
\end{lemma}
\begin{proof} The proof here is similar to \cite{Si91}. By definition of $\tau_1$, we have $\tau_1^*h_1 = h_1$. 
We can write $(p_1)_*h_2 = aL_1$, where $a$ is an integer. Here the pushforward is the pushfoward as divisors. Then
$$a = ((p_1)_*h_2.L_1^2)_{{\mathbf P}^3} = ((p_1)_*p_2^*L_2.L_1^2)_{{\mathbf P}^3} = (p_2^*L_2.p_1^*L_1^2)_X = (h_2.h_1^2)_X$$
by the projection formula. Since $X = 2(H_1+H_2)^3$ as cycles in $\mathbf P$, it follows that
$$(h_2.h_1^2)_X = (H_1.H_2^2.2(H_1+H_2)^3)_{\mathbf P} = 6\,\,.$$
Combining these two equalities, we obtain $a = 6$. Hence
$$h_2 + \tau_1^*h_2 = p_1^*(p_1)_*h_2 = p_1^*(6L_1) = 6h_1\,\, .$$
Then $\tau_1^*h_2 = -h_2 + 6h_1$. This together with $\tau_1^*h_1 = h_1$ 
proves the result for $\tau_1^*$. The proof for $\tau_2^*$ is identical. 
\end{proof}
\begin{lemma}\label{flop2}
$\overline{\tau}_i^{-1} \circ \nu_i : X_i^{+} := X \rightarrow \overline{X}_i$ is the flop of $\nu_i : X \rightarrow \overline{X}_i$ and $(\overline{\tau}_i^{-1} \circ \nu_i)^{-1} \circ \nu_i = \tau_i$ as birational automorphisms of $X$.
\end{lemma}
\begin{proof} The proof here is similar to \cite{Og11}. 
The second statement follows from $\overline{\tau}_i \circ \nu_i = \nu_i \circ \tau_i$. 
The relative Picard number $\rho(X/\overline{X}_1)$ 
is $1$ because $\rho(X) = 2$ and $\nu_1$ is a non-trivial projective 
contraction. By Lemma (\ref{matrix}), we have
$$\tau_1^*h_2 = -h_2 + 6h_1\,\, .$$
Thus $\tau_1^*h_2$ is relatively anti-ample for $\overline{\tau}_1^{-1} \circ \nu_1 : X \rightarrow \overline{X}_1$, 
while $h_2$ is relatively ample for $\nu_1$. 
Since $K_X = 0$, the map $\overline{\tau}_1^{-1} \circ \nu_1 : X_1^{+} := X \rightarrow \overline{X}_1$ is then the flop of $\nu_1 : X \rightarrow \overline{X}_1$. 
The proof for $i=2$ is identical.
\end{proof}

\begin{lemma}\label{bir} 
${\rm Bir}\, (X) = {\rm Aut}\, (X) \cdot \langle \tau_1, \tau_2 \rangle$.
\end{lemma}

\begin{proof} The proof here is also similar to \cite{Og11}. Recall that any flopping contraction of a Calabi-Yau manifold is given by a codimension one face of $\overline{{\rm Amp}}\, (X)$ up to automorphisms of $X$ 
(\cite[Theorem (5.7)]{Ka88}). Since there is no codimension one face of $\overline{{\rm Amp}}\, (X)$ other than ${\mathbf R}_{\ge 0}h_i$ ($i = 1, 2$), it follows that there is no flop other than $\tau_i : X \cdots\to X$ ($i = 1, 2$) up to ${\rm Aut}\,(X)$. On the other hand, by a result of Kawamata (\cite[Theorem 1]{Ka08}), any birational map between minimal models is decomposed into finitely many flops up to automorphisms of the target variety. Thus any $\varphi \in {\rm Bir}\, (X)$ is decomposed into a finite sequence of flops $\tau_i$ 
and an automorphism of $X$ at the last stage. This proves the result.
\end{proof}

\begin{lemma}\label{elem} Let $n$ be an integer. Then, with respect to the basis $\langle h_1, h_2 \rangle$ of ${\rm NS}\, (X)$ (resp. $\langle (2\sqrt{2}+3)h_1 - h_2, -h_1 + (2\sqrt{2}+3)h_2 \rangle$ of ${\rm NS}\, (X)_{\mathbf R}$), 
$$(\tau_1^*\tau_2^*)^n =  \left(\begin{array}{rr}
35 & 6\\
-6 & -1
\end{array} \right)^n\,\, , \,\, {\rm resp.}\,\, ,\,\,  
(\tau_1^*\tau_2^*)^n =  \left(\begin{array}{rr}
(17 + 12\sqrt{2})^n & 0\\
0 & (17 - 12\sqrt{2})^n
\end{array} \right)\,\, .$$
Here $(2\sqrt{2}+3)h_1 - h_2$ (resp. $-h_1 + (2\sqrt{2}+3)h_2$) is an eigen vector of $\tau_1^*\tau_2^*$, corresponding to the eigenvalue $17 + 12\sqrt{2} > 1$ (resp. $17 - 12\sqrt{2} = 1/(17 + 12\sqrt{2})$) of $\tau_1^*\tau_2^*$. 
In particular, $\tau_1^*\tau_2^*$ 
is of infinite order.
\end{lemma}
\begin{proof} Results follow from standard, concrete calculation in $2 \times 2$ matrices. For the last satatement, we use $17 + 12\sqrt{2} > 1$. 
\end{proof}
In what follows, we put
$$M := ({\mathbf R}_{> 0} (-h_1 + (3+2\sqrt{2})h_2  + {\mathbf R}_{> 0} ((3+2\sqrt{2})h_1 - h_2)) \cup \{0\}\,\, ,$$
$$A := \overline{{\rm Amp}}\, (X) = {\mathbf R}_{\ge 0} h_1  + {\mathbf R}_{\ge 0} h_2\,\, .$$
\begin{lemma}\label{comp} 
${\rm Bir}\, (X)^*A = M$. 
\end{lemma}
\begin{proof} 
If $g \in {\rm Aut}\, X$ acts nontrivially on ${\rm NS}\, (X)$, 
then $g^*h_1 = h_2$ and $g^*h_2 = h_1$ and $g^{*}M = M$ by the shape of $M$. 
So, it suffices to show that 
$\langle \tau_1^*, \tau_2^* \rangle(A)  = M$.
Since $\tau_i$ ($i = 1, 2$) are involutions, each element of $\langle \tau_1^*, \tau_2^* \rangle$ is of the following forms with 
$n \in {\mathbf Z}$:
$$(\tau_1^*\tau_2^*)^n\,\, ,\,\, \tau_2^*(\tau_1^*\tau_2^*)^n\,\, .$$
(For this, we also note that $\tau_1^* = \tau_2^*(\tau_2^*\tau_1^*)$ and $\tau_2^*\tau_1^* = (\tau_2^*)^{-1}(\tau_1^*)^{-1} = (\tau_1^*\tau_2^*)^{-1}$.) Now the result follows from Lemma (\ref{elem}) together with elementary calculation in $2 \times 2$ matrices, based on Lemmas (\ref{elem}), (\ref{matrix}).  
\end{proof} 
\begin{lemma}\label{fin}
$\overline{{\rm Mov}}\, (X) = \overline{M}$. 
\end{lemma}
\begin{proof} By Lemma (\ref{comp}), we have $M \subset \overline{{\rm Mov}}\, (X)$. Hence $\overline{M} \subset \overline{{\rm Mov}}\, (X)$. Let ${\rm Mov}\, (X)({\mathbf Q})$ be the set of rational point in the interior ${\rm Mov}\, (X)$ of $\overline{{\rm Mov}}\, (X)$. Let $d \in {\rm Mov}\, (X)({\mathbf Q})$. Then by Proposition (\ref{mov}), there is a positive integer 
$m$ and an effective movable divisor $D$ such that $md = [D]$. The pair $(X, \epsilon D)$ is klt for small positive rational number $\epsilon$. Note that $K_X + \epsilon D = \epsilon D$ by $K_X = 0$ and $\dim\, X = 3$. So, if $D$ 
is not nef, then we can run log minimal model program for the pair $(X, \epsilon D)$ to make $D$ nef. By the shape of $A = \overline{{\rm Amp}}\, (X)$, the first step in this program is either one of 
$\tau_i : X \cdots \to X$ ($i = 1$, $2$) 
and the manifold $X$ remains the same. Hence, so are every other step in the program. Hence there is $g \in \langle \tau_1, \tau_2 \rangle$ such that 
$g^*[D] \in A$, whence $g^*d \in A$. If $D$ is nef, we can choose $g = id$. 
Thus, $d \in (g^*)^{-1}(A) \subset M$ in any case and therefore 
${\rm Mov}\, (X)({\mathbf Q}) \subset M$. Since $\overline{{\rm Mov}\, (X)({\mathbf Q})} = \overline{{\rm Mov}}\, (X)$ (just by general topology), it follows that 
$\overline{{\rm Mov}}\, (X) \subset \overline{M}$. 
This completes the proof. 
\end{proof} 
Now we complete the proof of Proposition (\ref{ex3}).
\end{proof}

\end{document}